\newtheorem{Thm}{Theorem}[section]
\newtheorem{Prop}[Thm]{Proposition}
\newtheorem{Lem}[Thm]{Lemma}
\newtheorem{Cor}[Thm]{Corollary}
\theoremstyle{definition}
\newtheorem{Rem}[Thm]{Remark}
\newtheorem{Def}[Thm]{Definition}
\newcommand{\Cs}{C$^\ast$}
\newcommand{\Ws}{W$^\ast$}
\newcommand{\id}{\mbox{\rm id}}
\newcommand{\rg}{\mathop{{\mathrm C}_{\mathrm r}^\ast}}
\newcommand{\rc}{\mathop{\rtimes _{\mathrm r}}}
\newcommand{\votimes}{\mathop{\bar{\otimes}}}
\newcommand{\vnc}{\mathop{\bar{\rtimes}}}
\DeclareMathOperator{\supp}{supp}
\DeclareMathOperator{\im}{Im}
\DeclareMathOperator{\bigfp}{\lower0.25ex\hbox{\LARGE $\ast$}}
\title[The AP and exactness for locally compact groups]
{The approximation property and exactness of locally compact groups}
\author{Yuhei Suzuki}
\subjclass[2010]{Primary~
22D25, 46L55, Secondary~46L05}
\keywords{Approximation property, exactness, locally compact groups}
\address{Graduate school of mathematics, Nagoya University, Chikusaku, Nagoya, 464-8602, Japan}
\email{yuhei.suzuki@math.nagoya-u.ac.jp}
\begin{document}

\begin{abstract}
We extend a theorem of Haagerup and Kraus in the \Cs-algebra context:~
for a locally compact group with the approximation property (AP),
the reduced \Cs-crossed product construction preserves the strong operator approximation property (SOAP).
In particular their reduced group \Cs-algebras have the SOAP.
Our method also solves another open problem:~ the AP implies exactness for general locally compact groups.
\end{abstract}
\maketitle
\section{Introduction}
Recent research (see e.g., \cite{BCL}, \cite{BDV}, \cite{HR}, \cite{Oza16}, \cite{Suzsim}, \cite{Suzeq}) shows that non-discrete locally compact groups
also provide rich sources of interesting operator algebras.
They also reveal attractive and fruitful interactions between locally compact group theory and theory of operator algebras.
Considering these developments, it is natural and important to attempt extending
the results known only in the discrete case to general locally compact groups.

In this paper, we carry out this idea for the approximation property (AP).
This property is introduced by Haagerup and Kraus in \cite{HK} 
(see also \cite{ER}, \cite{Kra}) as a natural
group and operator space analogue of Grothendieck's Banach space approximation property \cite{Grot}.
Among other things, they show a strong relation between the AP and
the validity of Fubini-type theorems of associated operator algebras.
Indeed, for discrete groups, the AP is characterized by slice map properties
of the reduced group operator algebras.
For general locally compact groups with the AP,
they show that
the \Ws-crossed product construction preserves slice map properties.
(In particular this applies to the group von Neumann algebras.)
See Theorems 3.1 and 3.2 of \cite{HK} for the precise statement.
The proofs of these theorems are, unlike the discrete case, based on hard manipulations
involving unbounded operator-valued weights (\cite{Ha78a}, \cite{Ha78b}, \cite{Ha79a}, \cite{Ha79b}).
Since there are no analogous results in the \Cs-algebra context,
the \Cs-algebra case has remained open.

In this paper, we solve this problem by extending completely bounded multiplier operators to the reduced crossed products in a canonical way.
We then develop a technique to understand their stable pointwise convergence conditions.
This together with mollification arguments gives nice approximations
on the reduced crossed products.
As a byproduct of our approach, we obtain the implication
\[{\rm AP} \Longrightarrow{\rm exactness}\] for general locally compact groups.
This is known for discrete groups \cite{HK},
but the proof does not extend to the general case.
(For instance, for non-discrete groups, it is not known
if the exactness of a group follows from the exactness of the reduced group \Cs-algebra.)
We note that, for second countable weakly amenable groups, this has been proved in a recent paper \cite{BCL} based on metric geometry.
As a result of our theorem, \cite{BCL}, and \cite{CEO}, we obtain
important results on the Baum--Connes conjecture
for locally compact groups with the AP.
See \cite{BCL} for details.

\subsection*{Added in 25th January, 2019.} After this paper has appeared on arXiv,
the author was informed from Jason Crann and Matthias Neufang that the implication `AP $\Rightarrow$ exactness' was independently obtained
in their forthcoming paper \cite{CN}.
\section{Preliminaries}
For general facts on locally compact groups and continuous crossed products, we refer the reader to the books \cite{Fol}, \cite{Ped}.
For general backgrounds and applications of finite dimensional approximations
of groups and operator algebras, see \cite{BO}.
\subsection{Notations}
We first fix some notations used in the paper.

The symbols `$\odot$', `$\otimes$', `$\votimes$' stand for the algebraic,
minimal, and von Neumann algebra tensor product respectively.
The symbol `$\otimes$' is also used for Hilbert spaces.

For a \Cs-algebra $A$, denote by $\mathcal{M}(A)$ the multiplier algebra of $A$.

We put $\mathbb{K}:=\mathbb{K}(\ell^2(\mathbb{N}))$ and $\mathbb{B}:=\mathbb{B}(\ell^2(\mathbb{N}))$ for short.

For a Banach space $E$ and its dual space $E^\ast$,
denote by $\langle , \rangle$ the pairing map $E \times E^\ast \rightarrow \mathbb{C}$.

Let $A$ be a \Cs-algebra.
Let $X$ be a locally compact (Hausdorff) space equipped with a Radon measure $\mu$.
For $a\in C_c(X, A)$,
define
\[\|a\|_1:= \int_X \| a(x)\| d\mu(x),~ \|a\|_\infty:= \max_{x\in X} \|a(x)\|.\]
%For a Hilbert space $\mathfrak{H}$,
%et $C_c(X, \mathfrak{H})$ denote the space of compactly supported, norm continuous %maps from $X$ to $\mathfrak{H}$.
%On this space, define a semi-inner product $\langle, \rangle$
%by
%\[\langle\xi, \eta \rangle:=\int_X \langle \xi(s), \eta(s)\rangle.\]
%Denote $L^2(X, \mathfrak{H})$ the completion
%of the semi-pre-Hilbert space $(C_c(X, \mathfrak{H}), \langle, \rangle)$.
%Note that the map
%$C_c(X) \otimes \mathfrak{H} \rightarrow C_c(X, \mathfrak{H})
%given by $a\otimes \xi \mapsto (s\mapsto a(s)\xi)$
%extends to a unitary map
%$L^2(X)\otimes \mathfrak{H} \rightarrow L^2(X, \mathfrak{H})$.

Let $G$ be a locally compact group.
Throughout the paper, we always equip $G$ with its fixed left Haar measure $m$.
A $G$-\Cs-algebra is a \Cs-algebra equipped
with a pointwise norm continuous $G$-action.
A $G$-\Ws-algebra is a von Neumann algebra equipped
with a pointwise weak-$\ast$ continuous $G$-action.
A $G$-Hilbert space is a Hilbert space
equipped with a strongly continuous unitary representation of $G$.
For a $G$-\Cs-algebra $A$ and a $G$-\Ws-algebra $M$, denote by $A\rc G$, $M\vnc G$ their reduced crossed product respectively.
For $s\in G$,
denote by $u_s$ the canonical implementing unitary elements of $s$ in $\mathcal{M}(A \rc G)$ and $M\vnc G$.
To avoid confusion,
throughout the paper, we denote the product operations of the
twisted convolution algebras and the reduced crossed products
by the symbol `$\ast$'. The usual product convention is reserved for the pointwise product.

\subsection{Haagerup--Kraus's approximation property}
Let $G$ be a locally compact group.
Here we briefly review some definitions and facts related to the AP.
For details, we refer the reader to \cite{Ey} and the introduction of \cite{HK}.

\subsection*{Fourier algebra $A(G)$}
Define $A_c(G)$ to be the space of compactly supported functions $\varphi$
of the form $\varphi(s)= \langle s.\xi, \eta\rangle$
for some $\xi$, $\eta$ in a $G$-Hilbert space.
For $\varphi \in A_c(G)$, define
$\| \varphi \|_{A(G)}$ to be the infimum of
$\|\xi \|\|\eta\|$, where $\xi$, $\eta$ run over all vectors satisfying the above equation.
The Fourier algebra $A(G)$ of $G$ is the completion
of the normed space $(A_c(G), \|\cdot \|_{A(G)})$.
There is a canonical identification $A(G)= L(G)_\ast$.
\subsection*{Completely bounded multipliers $M_0A(G)$}
A function $\varphi \colon G \rightarrow \mathbb{C}$ is said to be a \emph{multiplier} of $A(G)$
if it satisfies $\varphi \cdot A(G) \subset A(G)$.
Since $A(G)^\ast= L(G)$, the multiplication by a multiplier $\varphi$ induces a normal operator ${\rm M}_\varphi$ on $L(G)$.
Define $M_0A(G)$ to be the set of all multipliers $\varphi$ of $A(G)$
whose induced map ${\rm M}_\varphi$ is completely bounded.
Note that for any $\varphi \in M_0A(G)$ and $a\in C_c(G)$,
we have ${\rm M}_\varphi(a)=\varphi \cdot a$.
By this formula, each $\varphi \in M_0A(G)$
defines a completely bounded operator $\bar{{\rm M}}_\varphi:= {\rm M}_\varphi|_{\rg(G)}$ on $\rg(G)$.
We equip $M_0A(G)$ with the norm
$\|\varphi\|_{M_0A(G)}:= \|{\rm M}_\varphi\|_{\rm cb}= \|\bar{{\rm M}}_{\varphi}\|_{\rm cb}$.
With this norm, $M_0A(G)$ forms a Banach space.
Note that $A(G) \subset M_0A(G) \subset C_b(G)$.
\subsection*{Predual $Q(G)$ of $M_0A(G)$}
Each $f\in L^1(G)$ defines an element of $M_0A(G)^\ast$
by \[\langle \varphi, f \rangle := \int_G \varphi(s)f(s)dm(s);~ \varphi\in M_0A(G).\]
The norm closure of $L^1(G)$ in $M_0A(G)^\ast$ is denoted by $Q(G)$.
The canonical pairing gives the identification $Q(G)^\ast = M_0A(G)$.
See Proposition 1.10 (b) in \cite{dCH} for a proof.

Now we are able to state the definition of the AP \cite{HK}.
\begin{Def}\label{Def:AP}
A locally compact group $G$ is said to have the {\it approximation property} (AP)
if there is a net $(\varphi_i)_{i\in I}$ in $A(G)$
converging to the constant function $1$ in the $\sigma(M_0A(G), Q(G))$-topology.
\end{Def}
As mentioned in Remark 1.2 of \cite{HK},
when $G$ has the AP,
one can choose an approximation net $(\varphi_i)_{i\in I}$ in the definition
from $A_c(G)$.

Recall that $G$ is {\it weakly amenable}
(\cite{Ha79c}, \cite{dCH}, \cite{CH})
if and only if the net $(\varphi_i)_{i \in I}$ in Definition \ref{Def:AP}
can be chosen to be $\| \cdot \|_{M_0A(G)}$-bounded.
In general weak amenability is much stronger than the AP.
An advantage of the AP is that it is stable under taking
various operations including extensions and free products \cite{HK}, \cite{BO}.
In contrast to the AP, weak amenability is easily broken by taking extensions (see \cite{Ha16}, \cite{OP}, \cite{Oz12}), and it is not known
if weak amenability is stable under taking free products.

The AP is useful to analyze the position of an element in the tensor products and reduced crossed products; see \cite{HK}, \cite{Suz17}, \cite{Suzmin}, \cite{Suz18}, \cite{Zac}, \cite{Zac2}, \cite{Zsi} for some applications.
\subsection{Operator approximation properties and Fubini-type theorems}
Since the definition of the AP does not require boundedness,
its operator algebraic counterparts naturally involve unbounded approximations.
To control behavior of unbounded approximations,
it is natural and convenient to introduce the following topologies
on the space of completely bounded operators.
Here we recall their definitions.

\subsection*{Stable pointwise topologies}
For \Cs-algebras $A, B$, denote by ${\rm CB}(A, B)$ the space of completely bounded linear maps from $A$ to $B$.
A net $(\varphi_i)_{i\in I}$ in ${\rm CB}(A, B)$ is said to converge to $\varphi\in {\rm CB}(A, B)$
in the {\it stable point-norm topology} if
$\id_{\mathbb{K}} \otimes \varphi_i$ converges to $\id_{\mathbb{K}} \otimes \varphi$ in the pointwise norm topology.
A net $(\varphi_i)_{i\in I}$ in ${\rm CB}(A, B)$ is said to converge to $\varphi\in {\rm CB}(A, B)$
in the {\it strong stable point-norm topology} if
$\id_{\mathbb{B}} \otimes \varphi_i$ converges to $\id_{\mathbb{B}} \otimes \varphi$ in the pointwise norm topology.
A \Cs-algebra $A$ is said to have the {\it operator approximation property} (OAP)
(resp.~the {\it strong OAP} (SOAP))
if $\id_A$ is in the closure of finite rank operators in ${\rm CB}(A):={\rm CB}(A, A)$
in the stable (resp.~the strong stable) point-norm topology.
Similarly, for von Neumann algebras $N, M$,
denote by ${\rm CB}_n(N, M)$ the space of normal completely bounded linear maps from $N$ to $M$.
A net $(\varphi_i)_{i\in I}$ in ${\rm CB}_n(N, M)$ is said to converge to $\varphi\in {\rm CB}_n(N, M)$
in the {\it stable point-ultraweak topology} if
$\id_{\mathbb{B}} \votimes \varphi_i$ converges to $\id_{\mathbb{B}} \votimes \varphi$ in the pointwise ultraweak topology.
A von Neumann algebra $M$ is said to have the {\it weak-$\ast$ OAP} (\mbox{W$^\ast$}OAP) if $\id_M$ is in the closure of finite rank operators in ${\rm CB}_n(M) := {\rm CB}_n(M, M)$
in the stable point-ultraweak topology.

These approximation properties are strongly related
to the {\it slice map property} \cite{Kra}.
We next review the slice map property and its connection with the OAPs.
\subsection*{Slice map property}
For \Cs-algebras $A, B$ and a closed subspace $X$ of $B$,
define
\[F(A, B, X):= \{x\in A \otimes B: (\omega \otimes\id_B)(x)\in X {\rm~for~all~}\omega \in A^\ast\}.\]
It is clear from the definition that $A \otimes X$, the norm closure of $A\odot X$ in $A\otimes B$, is contained in $F(A, B, X)$.
A triplet $(A, B, X)$ is said to have the {\it slice map property}
if they satisfy $F(A, B, X)= A\otimes X$.
The validity of the equality is also called a Fubini-type theorem.
This property is useful to study \Cs-subalgebras of tensor products
(see e.g., \cite{Zac}, \cite{Zsi}).

There is a natural von Neumann algebra analogue of the slice map property,
called the {\it weak slice map property}.
See Section 12.4 of \cite{BO} for details.
It is worth stating that Tomita's tensor commutant theorem
is equivalent to the validity of the weak slice map property for all triplets of von
Neumann algebras. See below Remark 12.4.5 of \cite{BO}.

Kraus shows that
the OAPs are characterized by slice map properties.
\begin{Thm}[\cite{Kra}, see also Theorem 12.4.4 of \cite{BO}]\label{Thm:OAP}
The following statements hold true.
\begin{enumerate}[\upshape(1)]
\item
A \Cs-algebra $A$ has the OAP if and only
if the triplet $(A, \mathbb{K}, X)$ has the slice map property for all closed subspaces $X$ of $\mathbb{K}$.
\item A \Cs-algebra $A$ has the SOAP if and only
if the triplet $(A, B, X)$ has the slice map property for all \Cs-algebras $B$ and their closed subspaces $X$.
\item A von Neumann algebra $M$ has the \mbox{W$^\ast$}OAP if and only
if the triplet $(M, N, X)$ has the weak slice map property
for all von Neumann algebras $N$ and their ultraweakly closed subspaces $X$.
\end{enumerate}
\end{Thm}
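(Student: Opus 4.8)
The plan is to establish all three equivalences by a single two-step scheme. In each case the forward implication unwinds a finite rank approximation of the identity through the slice construction, while the converse is a Hahn--Banach separation argument in which the element obstructing the approximation property manufactures the subspace obstructing a Fubini-type theorem. I will describe the scheme for part (2), and indicate the (minor) changes for (1) and (3).

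For the forward implication of (2), I would start from a net $(\varphi_i)$ of finite rank maps on $A$ with $\id_{\mathbb{B}}\otimes\varphi_i\to\id_{\mathbb{B}}\otimes\id_A$ pointwise in norm, and first upgrade this to $\id_B\otimes\varphi_i\to\id_B\otimes\id_A$ pointwise in norm for an \emph{arbitrary} \Cs-algebra $B$: any fixed element of $B\otimes A$ already lies in $B_0\otimes A$ for a separable \Cs-subalgebra $B_0\subseteq B$, and since $B_0$ embeds isometrically into $\mathbb{B}$ and the inclusion $B_0\otimes A\hookrightarrow\mathbb{B}\otimes A$ is isometric, the convergence transfers; flipping the tensor legs then gives $\varphi_i\otimes\id_B\to\id_{A\otimes B}$ pointwise in norm. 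Given $x\in F(A,B,X)$, I would expand each finite rank map as $\varphi_i=\sum_k f_k(\,\cdot\,)a_k$ with $f_k\in A^\ast$, $a_k\in A$, so that $(\varphi_i\otimes\id_B)(x)=\sum_k a_k\otimes(f_k\otimes\id_B)(x)\in A\odot X$, and pass to the limit to conclude $x\in\overline{A\odot X}=A\otimes X$. For (1) the same works with $B=\mathbb{K}$, with no separability step needed since $\id_{\mathbb{K}}\otimes\varphi_i\to\id$ is literally the OAP; for (3) I would run the argument with normal maps, normal functionals, and ultraweak limits throughout, so that $A\otimes X$ is replaced by the ultraweak closure $M\votimes X$ of $M\odot X$.

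For the converse of (2), I would argue by contradiction: assuming every triplet $(A,B,X)$ has the slice map property but $A$ fails the SOAP, a matrix amplification (using $M_n(\mathbb{B}\otimes A)=M_n(\mathbb{B})\otimes A\cong\mathbb{B}\otimes A$) reduces the failure to a single element $x_0\in\mathbb{B}\otimes A$ with $\|(\id_{\mathbb{B}}\otimes\varphi)(x_0)-x_0\|\geq\delta>0$ for all finite rank $\varphi$. Then the linear subspace $\mathcal C:=\{(\id_{\mathbb{B}}\otimes\varphi)(x_0):\varphi\ \text{finite rank}\}$ of $\mathbb{B}\otimes A$ has norm closure missing $x_0$, so Hahn--Banach yields $\Phi\in(\mathbb{B}\otimes A)^\ast$ with $\Phi|_{\mathcal C}=0$ and $\Phi(x_0)=1$. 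Testing $\Phi$ against rank one maps $f(\,\cdot\,)a$ and identifying $\mathbb{B}\otimes A=A\otimes\mathbb{B}$, I obtain $\Phi\bigl(a\otimes(f\otimes\id_{\mathbb{B}})(x_0)\bigr)=0$ for all $a\in A$, $f\in A^\ast$. The key move is then to let $X_0\subseteq\mathbb{B}$ be the closed linear span of the slices $(f\otimes\id_{\mathbb{B}})(x_0)$, $f\in A^\ast$: by construction $x_0\in F(A,\mathbb{B},X_0)$, so the hypothesis forces $x_0\in A\otimes X_0=\overline{A\odot X_0}$, whereas for each fixed $a$ the bounded functional $\eta\mapsto\Phi(a\otimes\eta)$ on $\mathbb{B}$ annihilates every slice, hence $X_0$, hence all of $A\odot X_0$ and its closure --- so $\Phi(x_0)=0$, a contradiction. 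For (1) this runs with $\mathbb{B}$ replaced by $\mathbb{K}$; for (3) one performs the separation in the ultraweak topology (so $\Phi$ is normal), takes $X_0$ to be the ultraweakly closed span of the normal slices, and invokes the weak slice map property.

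The content of the theorem sits in the converse: the bad element $x_0$ is converted, through its own slices, into the bad subspace $X_0$, and the slice map property is precisely what is needed to run the separation in reverse; the forward direction and the matrix amplifications are essentially bookkeeping. I expect the main technical obstacle to be the forward direction of (2) and (3) --- passing from the ``universal'' ampliation by $\mathbb{B}(\ell^2(\mathbb{N}))$ to an arbitrary coefficient algebra, which for von Neumann algebras needs some care with preduals and ultraweak density --- together with the routine but necessary verification that the various ampliations, flips, and slice maps $f\otimes\id$, $\omega\votimes\id$ behave as completely bounded, respectively normal, maps as used above.
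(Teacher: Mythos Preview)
The paper does not supply its own proof of this theorem: it is stated as a result of Kraus \cite{Kra} (with a textbook reference to Theorem~12.4.4 of \cite{BO}) and is used as a black box in the proof of Theorem~\ref{Thm:SOAP}. So there is nothing to compare against on the paper's side.

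Your sketch is essentially the standard argument one finds in those references: the forward direction pushes a finite rank approximation of $\id_A$ through an element of $F(A,B,X)$ to land in $A\odot X$, and the converse is the Hahn--Banach separation producing a bad subspace $X_0\subset \mathbb{B}$ (respectively $\mathbb{K}$, respectively an ultraweakly closed subspace) from the slices of a bad element $x_0$. The matrix-amplification trick to reduce to a single witness $x_0$, and the separable-subalgebra embedding $B_0\hookrightarrow\mathbb{B}$ for the forward direction of~(2), are exactly the moves in the textbook proof, and they are handled correctly in your outline. Your caveat about the forward direction of~(3) is well placed: passing from ampliation by $\mathbb{B}$ to ampliation by an arbitrary von Neumann algebra $N$ with an \emph{unbounded} net cannot be done by the separable-embedding trick; one instead uses that $(M\votimes N)_\ast$ is the closure of $M_\ast\odot N_\ast$ and reduces the pointwise ultraweak convergence to product functionals, which is the substance of the argument in \cite{Kra}.
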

By the statement (2), the SOAP implies exactness.

Haagerup--Kraus \cite{HK} characterize the AP of a discrete group as follows.
\begin{Thm}[\cite{HK}, Theorem 2.1]
For a discrete group $\Gamma$, the following conditions are equivalent:
\begin{enumerate}[\upshape(1)]
\item $\Gamma$ has the AP,
\item $\rg(\Gamma)$ has the OAP,
\item $\rg(\Gamma)$ has the SOAP,
\item $L(\Gamma)$ has the W$^\ast$OAP.
\end{enumerate}
\end{Thm}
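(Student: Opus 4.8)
The plan is to establish the cycle $(1)\Rightarrow(3)\Rightarrow(2)\Rightarrow(1)$ together with $(1)\Leftrightarrow(4)$. The implications out of $(1)$ are the ``net‑pushing'' directions and are comparatively soft; the real content is in the converses $(2)\Rightarrow(1)$ and $(4)\Rightarrow(1)$. Throughout I write $\lambda_s:=u_s\in\rg(\Gamma)$ for the canonical unitaries and $\delta_t$ for the standard basis of $\ell^2(\Gamma)$.

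\emph{The implications $(1)\Rightarrow(3)$ and $(1)\Rightarrow(4)$.} Fix, by Remark 1.2 of \cite{HK}, an approximating net $(\varphi_i)$ in $A_c(\Gamma)$ — so each $\varphi_i$ is finitely supported — with $\varphi_i\to 1$ in $\sigma(M_0A(\Gamma),Q(\Gamma))$. Then $\bar{{\rm M}}_{\varphi_i}$ is a completely bounded finite‑rank operator on $\rg(\Gamma)$ (its range lies in ${\rm span}\{\lambda_s:\varphi_i(s)\neq 0\}$), and $\mathrm M_{\varphi_i}$ is a normal finite‑rank operator on $L(\Gamma)$; so it suffices to prove $\bar{{\rm M}}_{\varphi_i}\to\id$ in the strong stable point‑norm topology and $\mathrm M_{\varphi_i}\to\id$ in the stable point‑ultraweak topology. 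In each case one equips the relevant operator space with that locally convex topology and observes that its continuous functionals are finite sums of maps $T\mapsto\langle(\id_{\mathbb B}\otimes T)(x),\psi\rangle$, for $x$ in $\mathbb B\otimes\rg(\Gamma)$ (resp.\ $\mathbb B\votimes L(\Gamma)$) and $\psi$ in the appropriate dual. The point is that, pre‑composed with $\varphi\mapsto\bar{{\rm M}}_{\varphi}$ (resp.\ $\mathrm M_\varphi$), each such functional lies in $Q(\Gamma)$: since $\|\id_{\mathbb B}\otimes\bar{{\rm M}}_\varphi\|=\|\bar{{\rm M}}_\varphi\|_{\rm cb}=\|\varphi\|_{M_0A(\Gamma)}$, the map $q_{x,\psi}\colon\varphi\mapsto\langle(\id_{\mathbb B}\otimes\bar{{\rm M}}_\varphi)(x),\psi\rangle$ is in $M_0A(\Gamma)^\ast$ with $\|q_{x,\psi}\|\le\|\psi\|\,\|x\|$, while on the norm‑dense subspace $\mathbb B\odot\mathbb C[\Gamma]$ one has $(\id_{\mathbb B}\otimes\bar{{\rm M}}_\varphi)(b\otimes\lambda_t)=\varphi(t)\,b\otimes\lambda_t$, so $q_{b\otimes\lambda_t,\psi}=\psi(b\otimes\lambda_t)\,\delta_t\in L^1(\Gamma)$; as $Q(\Gamma)$ is the norm‑closure of $L^1(\Gamma)$ in $M_0A(\Gamma)^\ast$, boundedness of $x\mapsto q_{x,\psi}$ gives $q_{x,\psi}\in Q(\Gamma)$ for all $x$. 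For $L(\Gamma)$ one argues in the same way, reducing — via norm‑density of $\mathbb B_\ast\odot A(\Gamma)$ in $(\mathbb B\votimes L(\Gamma))_\ast$, of $A_c(\Gamma)$ in $A(\Gamma)$, and by slicing off the $\mathbb B$‑leg — to the case $\psi=\omega\otimes\delta_r$, where the functional is again a scalar multiple of $\delta_r$. Thus $q_{x,\psi}(\varphi_i)\to q_{x,\psi}(1)$ for all admissible $(x,\psi)$, i.e.\ $\bar{{\rm M}}_{\varphi_i}\to\id$ (resp.\ $\mathrm M_{\varphi_i}\to\id$) in the weak topology attached to the relevant locally convex topology; since the finite‑rank operators form a convex set, Hahn--Banach separation (a convex set has the same closure in a locally convex topology and in its weak topology) upgrades this to convergence in the topology itself, giving SOAP of $\rg(\Gamma)$ and W$^\ast$OAP of $L(\Gamma)$.

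\emph{The implication $(3)\Rightarrow(2)$.} Since $\mathbb K\subseteq\mathbb B$ and the minimal tensor product is injective, every seminorm defining the stable point‑norm topology on ${\rm CB}(\rg(\Gamma))$ occurs among those defining the strong stable point‑norm topology; the latter is therefore finer, so the (same) set of finite‑rank operators has smaller closure in it, and hence $\id\in\rg(\Gamma)$ lying in that smaller closure (SOAP) forces it to lie in the larger one (OAP).

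\emph{The implication $(2)\Rightarrow(1)$ — the main obstacle.} Given finite‑rank operators $T_i$ on $\rg(\Gamma)$ with $\id_{\mathbb K}\otimes T_i\to\id$ pointwise in norm, one must manufacture a net in $A(\Gamma)$ converging to $1$ in $\sigma(M_0A(\Gamma),Q(\Gamma))$. The natural device is a diagonalisation: using the diagonal $\ast$‑homomorphism $\delta\colon\rg(\Gamma)\to\rg(\Gamma)\otimes\rg(\Gamma)$, $\lambda_s\mapsto\lambda_s\otimes\lambda_s$ (well defined, as $\lambda\otimes\lambda$ is quasi‑contained in $\lambda$ by Fell's absorption principle), set $\varphi_i(s):=\langle T_i(\lambda_s)\delta_e,\delta_s\rangle$, so that $\bar{{\rm M}}_{\varphi_i}(\lambda_s)=\langle T_i(\lambda_s)\delta_e,\delta_s\rangle\lambda_s$, one has $\varphi_i(s)\to\langle\lambda_s\delta_e,\delta_s\rangle=1$ for every $s$, and $\varphi_i\in\ell^2(\Gamma)\subseteq A(\Gamma)$ whenever $T_i$ has finite rank (the inclusion $\ell^2(\Gamma)\subseteq A(\Gamma)$ being elementary). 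The hard part — and this is where the argument has genuine content — is to upgrade ``$\varphi_i\to 1$ pointwise'' to ``$\varphi_i\to 1$ in $\sigma(M_0A(\Gamma),Q(\Gamma))$''. One cannot invoke density of $L^1(\Gamma)$ in $Q(\Gamma)$, because the $\varphi_i$ need not be $\|\cdot\|_{M_0A(\Gamma)}$‑bounded (that boundedness is exactly weak amenability, strictly stronger than the AP), and in fact pointwise convergence of such $\varphi_i$ to $1$ holds for \emph{every} discrete group. Instead one must use the full completely bounded, matrix‑amplified convergence $\id_{\mathbb K}\otimes T_i\to\id$ together with a concrete description of the predual $Q(\Gamma)$ (following \cite{dCH}): represent a general $q\in Q(\Gamma)$ through the pairing of $M_0A(\Gamma)$ with amplifications of the multiplier maps $\bar{{\rm M}}_\varphi$, transport $q$ to the $T_i$ via the identity $\bar{{\rm M}}_{\varphi_i}(\lambda_s)=\langle T_i(\lambda_s)\delta_e,\delta_s\rangle\lambda_s$, and conclude from the convergence of the $T_i$ there. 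The implication $(4)\Rightarrow(1)$ is entirely parallel, extracting $\varphi_i(s):=\tau_0(\lambda_s^\ast T_i(\lambda_s))$ (with $\tau_0$ the canonical trace) from finite‑rank normal maps $T_i\to\id_{L(\Gamma)}$ in the stable point‑ultraweak topology; together with the preceding implications this closes the cycle $(1)\Rightarrow(3)\Rightarrow(2)\Rightarrow(1)$ and the equivalence with $(4)$.
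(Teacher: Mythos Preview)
The paper does not prove this statement; it is quoted from \cite[Theorem~2.1]{HK} as background, so there is no in-paper argument to compare against. (The paper's own results, Corollary~\ref{Cor:converge} and Proposition~\ref{Prop:approx} with $A=\mathbb C$, do recover the implication $(1)\Rightarrow(3)$ along the lines you describe.) Your arguments for $(1)\Rightarrow(3)$, $(1)\Rightarrow(4)$ and $(3)\Rightarrow(2)$ are correct and standard.

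Your sketch of $(2)\Rightarrow(1)$ (and the parallel $(4)\Rightarrow(1)$) has a real gap. You set $\varphi_T(s)=\langle T(\lambda_s)\delta_e,\delta_s\rangle=\tau(\lambda_s^\ast T(\lambda_s))$ and correctly diagnose the obstacle: pointwise convergence $\varphi_i\to 1$ is vacuous, and the net $(\varphi_i)$ is unbounded in $M_0A(\Gamma)$. But the ``transport'' step is not justified. The identity $\bar{{\rm M}}_{\varphi_i}(\lambda_s)=\langle T_i(\lambda_s)\delta_e,\delta_s\rangle\lambda_s$ is only the definition on generators; it does not by itself express $(\id_{\mathbb K}\otimes\bar{{\rm M}}_{\varphi_i})(x)$, for a general $x\in\mathbb K\otimes\rg(\Gamma)$, in terms of $\id_{\mathbb K}\otimes T_i$ applied to anything. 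The missing content --- and this is where the actual work in \cite{HK} lies (see also \cite[Section~12.4]{BO}) --- is an \emph{operator-level factorisation}: one exhibits completely bounded maps $\Phi,\Psi$, independent of $T$ and built from the comultiplication $\lambda_s\mapsto\lambda_s\otimes\lambda_s$ and a slice by the canonical trace, such that $\bar{{\rm M}}_{\varphi_T}=\Psi\circ(T\otimes\id)\circ\Phi$. This simultaneously yields $\|\varphi_T\|_{M_0A(\Gamma)}\le\|T\|_{\rm cb}$ and, more to the point, makes $T\mapsto\varphi_T$ continuous from the stable point-norm topology on ${\rm CB}(\rg(\Gamma))$ to the $\sigma(M_0A(\Gamma),Q(\Gamma))$-topology: for each $q=q_{x,\psi}$ one then has $q(\varphi_T)=\langle(\id_{\mathbb K}\otimes T\otimes\id)(y),\chi\rangle$ with $(y,\chi)$ fixed by $(x,\psi)$, so $\id_{\mathbb K}\otimes T_i\to\id$ forces $q(\varphi_i)\to q(1)$. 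Without this factorisation, even granting your description of $Q(\Gamma)$ via functionals $q_{x,\psi}$, there is no way to pass from convergence of the $T_i$ to convergence of the $\varphi_i$ --- precisely because, as you yourself observe, unboundedness of $(\varphi_i)$ rules out any density argument in $Q(\Gamma)$.
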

For general locally compact groups,
the implication ``(1) $\Rightarrow$ (4)'' is shown in \cite{HK}.
The implications ``(2), (3), (4) $\Rightarrow$ (1)''
fail in general; the group $G:=SL(3, \mathbb{R})$ does not have the AP \cite{LS},
while $\rg(G)$ and $L(G)$ are amenable (cf.~Remark 2.5 of \cite{HK}).
In this paper, we prove the new implications ``(1) $\Rightarrow$ (2), (3)''.
To the author's knowledge, this was left open for a long time.
The implications follow from a more general result on the reduced crossed products (Theorem \ref{Thm:SOAP}).

\section{Main Theorems}
One of the main ideas of the present paper is extending multiplier operators to the reduced crossed product algebras.
We then study their convergence conditions.
This is already appeared in the discrete case in the proof of Proposition 3.4 in \cite{Suz17}.

Throughout this section, let $G$ be a locally compact group.
\begin{Lem}\label{Lem:multiplier} Let $\varphi \in M_0A(G)$.
Then the following statements hold true.
\begin{enumerate}[\upshape(1)]
\item Let $A$ be a nonzero $G$-\Cs-algebra.
Then there is a completely bounded linear map
\[\bar{{\rm M}}_{A, \varphi} \colon A \rc G \rightarrow A \rc G\]
satisfying
\[\bar{{\rm M}}_{A, \varphi}(a)= \varphi \cdot a \quad {\rm for~ all~} a\in C_c(G, A).\]
Moreover we have $\|\bar{{\rm M}}_{A, \varphi}\|_{\rm cb}=\|\varphi\|_{M_0A(G)}$.
\item
Let $M$ be a nonzero $G$-\Ws-algebra.
Then there is a normal completely bounded linear map
\[{\rm M}_{M, \varphi} \colon M \vnc G \rightarrow M \vnc G\]
satisfying 
\[{\rm M}_{M, \varphi}(a)= \varphi \cdot a \quad {\rm  for~ all~} a\in C_c(G, M).\]
Moreover we have $\|{\rm M}_{M, \varphi}\|_{\rm cb}=\|\varphi\|_{M_0A(G)}$.
\end{enumerate}
\end{Lem}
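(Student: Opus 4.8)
The plan is to construct the extended multiplier operator by first dilating the problem to a standard form and then invoking the well-known description of $M_0A(G)$-multipliers via completely bounded Schur-type multipliers on the level of the regular representation. Concretely, fix a faithful covariant representation $(\pi, \lambda^A)$ of the $G$-$\Cs$-algebra $A$ on a Hilbert space of the form $H \otimes L^2(G)$, realizing $A \rc G$ concretely with the canonical unitaries $u_s$ implementing the action. For $a \in C_c(G, A)$ one has $a = \int_G \pi(a(s)) u_s \, dm(s)$ (in the appropriate weak sense), and the prescription $\bar{\rm M}_{A,\varphi}(a) := \varphi \cdot a = \int_G \varphi(s)\, \pi(a(s)) u_s \, dm(s)$ is what we must show extends to a completely bounded map on the whole crossed product with the claimed cb-norm.

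The key step is to recognize this operation as conjugation by the multiplier $\varphi$ acting "only in the $G$-direction". Recall that for $\varphi \in M_0A(G)$, by the Bo\.zejko--Fendler / de Canni\`ere--Haagerup theory there exist a Hilbert space $\mathcal H$ and bounded continuous maps $P, Q \colon G \to \mathcal H$ with $\varphi(t^{-1}s) = \langle P(s), Q(t) \rangle$ and $\|P\|_\infty \|Q\|_\infty = \|\varphi\|_{M_0A(G)}$. Using such a Gauss-type decomposition, one writes $\bar{\rm M}_{A,\varphi}$ on $\mathbb B(H \otimes L^2(G))$ (restricted to $A \rc G$) as $x \mapsto V^*(1_{A \rc G} \otimes 1_{\mathcal H}) W x\, \ldots$ — more precisely as a composition $x \mapsto W_Q^*\, (x \otimes 1_{\mathcal H})\, W_P$ where $W_P, W_Q$ are the isometries/bounded operators $H \otimes L^2(G) \to H \otimes L^2(G) \otimes \mathcal H$ built from $P$ and $Q$ as in the standard proof that $\bar{\rm M}_\varphi$ is cb on $\rg(G)$. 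The point is that these $W_P, W_Q$ commute with $\pi(A)$ (they act trivially on $H$ and as multiplication-type operators in the $G$-variable), hence the composition sends $A \rc G$ into itself, is completely bounded with $\|\cdot\|_{\rm cb} \le \|P\|_\infty\|Q\|_\infty = \|\varphi\|_{M_0A(G)}$, and on $C_c(G,A)$ computes $\varphi \cdot a$ by the reproducing identity for $\varphi$. The reverse inequality $\|\bar{\rm M}_{A,\varphi}\|_{\rm cb} \ge \|\varphi\|_{M_0A(G)}$ follows by restricting to (a corner of) $1 \otimes \rg(G) \subset \mathcal M(A \rc G)$, using that $A$ is nonzero to find a state and compress, recovering $\bar{\rm M}_\varphi$ on $\rg(G)$ whose cb-norm is exactly $\|\varphi\|_{M_0A(G)}$.

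For part (2), I would run the same argument in the von Neumann algebra setting: represent $M \vnc G$ on $H \otimes L^2(G)$ with $M \subset \mathbb B(H)$ in standard form and $G$ acting via $u_s$, and define ${\rm M}_{M,\varphi}$ by the identical formula $x \mapsto W_Q^*(x \otimes 1_{\mathcal H})W_P$. Since $W_P, W_Q$ are built from bounded continuous functions into $\mathcal H$ they are bounded, the map is weak-$\ast$ continuous (it is a composition of an amplification, which is normal, with conjugations by fixed bounded operators, which are normal), hence normal; it preserves $M \vnc G$ because $W_P, W_Q$ commute with $M \otimes 1$ and normalize $\{u_s\}$ appropriately; and the cb-norm computation is the same, with the lower bound coming from compression to $1 \votimes L(G)$ and the Haagerup--Kraus fact $\|{\rm M}_\varphi\|_{\rm cb} = \|\varphi\|_{M_0A(G)}$ on $L(G)$.

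The main obstacle I anticipate is purely in the non-discrete bookkeeping: making precise the "integrate against $u_s$" manipulations — i.e.\ checking that the operator $W_P$ defined by $(W_P \xi)(s) = \xi(s) \otimes P(s)$ (suitably interpreted on $H \otimes L^2(G)$) is bounded and that conjugation by it genuinely implements multiplication by $\varphi$ on the dense subalgebra $C_c(G,A)$, rather than merely on the group-algebra part. This requires a careful continuity/measurability argument for $P, Q$ (they can be taken continuous, which helps) and a density argument to pass from $C_c(G,A)$ to all of $A \rc G$. Once the boundedness of $W_P, W_Q$ is in hand, everything else is formal. I do not expect the exactness of the cb-norm to cause trouble beyond citing the discrete/$\rg(G)$ normalization already recorded in the preliminaries.
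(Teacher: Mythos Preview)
Your approach is correct but takes a genuinely different route from the paper. You unpack $\varphi \in M_0A(G)$ via the Bo\.{z}ejko--Fendler/Gilbert factorization $\varphi(t^{-1}s)=\langle P(s),Q(t)\rangle$ and implement the multiplier spatially as $x\mapsto W_Q^\ast(x\otimes 1_{\mathcal H})W_P$ on $\mathbb{B}(H\otimes L^2(G))$, using that the multiplication-type operators $W_P,W_Q$ commute with the regular representation of $A$. The paper instead proves (2) first by a Fell-absorption trick: it builds a normal embedding $\delta\colon M\vnc G\to (M\vnc G)\votimes L(G)$ sending $xu_s\mapsto xu_s\otimes\lambda_s$, observes that $\id\votimes{\rm M}_\varphi$ preserves $\im(\delta)$, and sets ${\rm M}_{M,\varphi}:=\delta^{-1}\circ(\id\votimes{\rm M}_\varphi)\circ\delta$; part (1) then follows by restriction after embedding $A$ equivariantly into a $G$-\Ws-algebra. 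The paper's route avoids invoking the Herz--Schur characterization altogether, makes normality and the cb-norm equality essentially tautological (since $\delta$ is a normal $\ast$-isomorphism onto its image), and produces the comodule map $\delta$ as a tool that is reused in the proof of Lemma~\ref{Lem:Q(G)}(2). Your route is more explicit but needs some care with conventions: with the student's stated formulas for $\varphi$ and $W_P$, conjugation on $u_t$ yields the factor $\langle P(t^{-1}s),Q(s)\rangle$, which equals $\varphi(t)$ only after replacing $P,Q$ by $s\mapsto P(s^{-1}),\,s\mapsto Q(s^{-1})$ (this is exactly the ``bookkeeping'' you flagged). Also, your lower bound $\|\bar{\rm M}_{A,\varphi}\|_{\rm cb}\ge\|\varphi\|_{M_0A(G)}$ in the non-unital \Cs-case genuinely needs the compression/state argument you sketch, whereas in the paper it is inherited for free from the \Ws-case.
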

\begin{proof}
\noindent (2): For the \Ws-algebra case, the construction is essentially the same
as the discrete case (cf.~ Proposition 3.4 of \cite{Suz17}).
Fix a normal faithful $\ast$-representation $M \vnc G \subset \mathbb{B}(\mathfrak{H})$.
Consider the covariant representation of $M$ on $\mathfrak{H} \otimes L^2(G)$ given by
\[x\mapsto x \otimes \id_{L^2(G)},~ s \mapsto u_s \otimes \lambda_s \quad {\rm for~}x\in M,~ s\in G.\]
Here $\lambda$ denotes the left regular representation of $G$.
Then, by Fell's absorption principal, this covariant representation is unitary equivalent to
a faithful regular covariant representation.
Hence it extends to an injective normal $\ast$-homomorphism
\[\delta \colon M \vnc G \rightarrow (M \vnc G) \votimes L(G).\]
Observe that
$\id_{M \vnc G} \votimes {\rm M}_\varphi$ preserves $\im(\delta)$.
Denote by $\gamma \colon \im(\delta) \rightarrow M \vnc G$ the inverse map of $\delta$.
Define \[{\rm M}_{M, \varphi}:=\gamma \circ (\id_{M \vnc G} \votimes {\rm M}_\varphi) \circ \delta.\]
We show that ${\rm M}_{M, \varphi}$ possesses the desired properties.
It is clear from the definition that ${\rm M}_{M, \varphi}$ is normal
and satisfies $\|{\rm M}_{M, \varphi}\|_{\rm cb}=\|\varphi\|_{M_0A(G)}$.
Since $\varphi \in C_b(G)$, it is easy to check that
${\rm M}_{\varphi}(\lambda_s)= \varphi(s)\lambda_s$ for all $s\in G$.
Therefore ${\rm M}_{M, \varphi}(xu_s)= \varphi(s)xu_s$ for all $x\in M$ and $s\in G$.
As ${\rm M}_{M, \varphi}$ is normal,
this yields
${\rm M}_{M, \varphi}(a)= \varphi \cdot a$ for all $a \in C_c(G, M)$.

\noindent (1): We first take a $G$-equivariant embedding of $A$ into a $G$-\Ws-algebra $M$.
(For instance, a faithful regular covariant representation gives such an embedding.)
The inclusion $A\subset M$ induces $A \rc G \subset M \vnc G$.
Since ${\rm M}_{M, \varphi}(C_c(G, A))=\varphi \cdot C_c(G, A)\subset C_c(G, A)$,
the restriction map ${\rm M}_{M, \varphi}|_{A \rc G}$ provides the desired map.
\end{proof}
The following lemma and its corollary are key observations.
Note that even for the trivial group actions, they generalize Propositions 1.3, 1.4,
and Theorem 1.9 of \cite{HK} in the \Cs-algebra case.
\begin{Lem}\label{Lem:Q(G)}
\begin{enumerate}[\upshape(1)]
\item
Let $A$ be a $G$-\Cs-algebra.
Let $a\in A \rc G$, $f\in (A \rc G)^\ast$.
Then the linear functional $\omega_{a, f}$ on $M_0A(G)$ defined by
\[\omega_{a, f}(\varphi):= \langle \bar{{\rm M}}_{A, \varphi}(a), f \rangle;\quad \varphi \in M_0A(G)\]
is contained in $Q(G)$.
\item Let $M$ be a $G$-\Ws-algebra.
Let $a \in M \vnc G$, $f\in (M \vnc G)_\ast$, $\psi \in A_c(G)$.
Then the linear functional $\omega_{a, f, \psi}$ on $M_0A(G)$ defined by
\[\omega_{a, f, \psi}(\varphi):= \langle {\rm M}_{M, \psi \ast \varphi}( a), f \rangle;\quad \varphi\in M_0A(G)\]
 is contained in $Q(G)$.
\end{enumerate}
\end{Lem}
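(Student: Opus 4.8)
The plan is to reduce each statement to exhibiting, on "compactly supported'' data, an explicit function in $L^1(G)$, and then to use that $Q(G)$ is norm-closed in $M_0A(G)^\ast$; for the general (von Neumann) case of part (2) I would in addition pass to the predual and invoke the Krein--Smulian theorem, which says that a linear functional on $Q(G)^\ast=M_0A(G)$ is $\sigma(M_0A(G),Q(G))$-continuous as soon as its restriction to the unit ball is.

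\emph{Part (1).} For fixed $f$, $a\mapsto\omega_{a,f}$ is linear with $\|\omega_{a,f}\|\le\|f\|\,\|a\|$ by the norm identity of Lemma~\ref{Lem:multiplier}(1). As $C_c(G,A)$ is norm-dense in $A\rc G$ and $Q(G)$ is norm-closed, we may assume $a\in C_c(G,A)$, say $\supp a\subseteq K$ compact; then $\bar{{\rm M}}_{A,\varphi}(a)=\varphi\cdot a\in C_c(G,A)$, so $\omega_{a,f}(\varphi)=\langle\varphi\cdot a,f\rangle$. Since the $A\rc G$-norm is dominated by $\|\cdot\|_1$ on $C_c(G,A)$, $f$ restricts to a bounded functional on the Bochner space $L^1(K,A)$, hence is given by integration against a weak-$\ast$-measurable $\psi_f\colon K\to A^\ast$ with $\|\psi_f\|_\infty\le\|f\|$. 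Applying this to $b=\varphi\cdot a$,
\[\omega_{a,f}(\varphi)=\int_G\varphi(t)\,\langle a(t),\psi_f(t)\rangle\,dm(t)=\langle\varphi,g_f\rangle,\qquad g_f(t):=\langle a(t),\psi_f(t)\rangle,\]
and $g_f\in L^1(G)\subset Q(G)$ because $|g_f(t)|\le\|a(t)\|\,\|f\|$ with $\|a(\cdot)\|\in C_c(G)$.

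\emph{Part (2).} Write $N:=M\vnc G$. For $a\in C_c(G,M)$ the argument of part (1) applies: ${\rm M}_{M,\psi\ast\varphi}(a)=(\psi\ast\varphi)\cdot a\in C_c(G,M)$, and extending $f$ to $L^1(G,M)^\ast$ as before, the substitution $t=sr$ together with Fubini (licit because $\psi$ and $a$ have compact support) gives $\omega_{a,f,\psi}(\varphi)=\langle\varphi,h\rangle$ with $h(r):=\int_G\psi(s)\langle a(sr),\psi_f(sr)\rangle\,dm(s)$; here $\|h\|_1\le\|\psi\|_1\|a\|_1\|f\|$, so $h\in L^1(G)$ and $\omega_{a,f,\psi}\in Q(G)$. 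For an arbitrary $a\in M\vnc G$ the norm-density argument fails, and I would instead argue via the predual. Since ${\rm M}_{M,\psi\ast\varphi}$ is normal, $\omega_{a,f,\psi}(\varphi)=\langle a,{\rm M}_{M,\psi\ast\varphi}^\ast(f)\rangle$; noting $\|\psi\ast\varphi\|_{M_0A(G)}\le\|\psi\|_1\|\varphi\|_{M_0A(G)}$ (so $\psi\ast\varphi\in M_0A(G)$ and these stay bounded), it follows from Krein--Smulian that $\omega_{a,f,\psi}\in Q(G)$ once $\varphi\mapsto{\rm M}_{M,\psi\ast\varphi}^\ast(f)$ is continuous from the unit ball of $M_0A(G)$ with the $\sigma(M_0A(G),Q(G))$-topology into $(M\vnc G)_\ast$ with the norm topology. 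Using ${\rm M}_{M,\zeta}=\gamma\circ(\id_N\votimes{\rm M}_\zeta)\circ\delta$ from Lemma~\ref{Lem:multiplier}(2), and that the predual of ${\rm M}_\zeta$ on $L(G)_\ast=A(G)$ is multiplication by $\zeta$, this reduces — after fixing a normal extension $\tilde f\in(N\votimes L(G))_\ast$ of $f\circ\gamma$ and approximating it in norm by finite sums $\sum_k g_k\otimes h_k$ — to the assertion that, for each $h\in A(G)$, the map $\varphi\mapsto(\psi\ast\varphi)\cdot h$ is continuous from the unit ball of $M_0A(G)$ (weak-$\ast$) into $A(G)$ (norm).

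This last assertion is the heart of the matter and the only step where $\psi\in A_c(G)$, not merely $\psi\in L^1(G)$, is used; it amounts to \cite[Prop.~1.3]{HK}. The mechanism is that writing $\psi$ through compactly supported vectors in $L^2(G)$ makes the data controlling $\varphi\mapsto(\psi\ast\varphi)\cdot h$ uniformly tight, so that a bounded net $\varphi_i\to0$ in $\sigma(M_0A(G),Q(G))$ forces $(\psi\ast\varphi_i)\cdot h\to0$ in $A(G)$-norm (for $G=\mathbb R$ this is literally convolution of an $L^1$-function with a uniformly tight, bounded, weak-$\ast$-null net of measures on $\widehat G$, which converges in $L^1$). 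The mollifier cannot be removed: for $a=u_e$ and $f$ a state one has $\langle{\rm M}_{M,\varphi}(u_e),f\rangle=\varphi(e)$, and evaluation at $e$ is not in $Q(G)$ in general (e.g.\ for $G=\mathbb R$). I therefore expect the passage from $C_c(G,M)$ to general $a\in M\vnc G$ — equivalently, the weak-$\ast$-to-norm continuity above — to be the principal obstacle.
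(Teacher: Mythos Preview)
Your argument is correct in both parts, and for part~(1) it is essentially the paper's proof. The only difference is how you produce the representing function: you invoke the duality $L^1(K,A)^\ast\cong L^\infty_{w^\ast}(K,A^\ast)$, whereas the paper simply extends $f$ strictly to $\mathcal{M}(A\rc G)$ and sets $\tilde f(s)(x):=\langle xu_s,f^{\mathcal M}\rangle$. The latter is more elementary (no vector-valued duality theorem needed) and actually yields $\omega_{a,f}\in C_c(G)$, not just $L^1(G)$; your appeal to $L^1(K,A)^\ast$ is fine but is the heavier tool here.

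For part~(2) you reach the same destination as the paper but by a considerably longer road. The paper does not treat $a\in C_c(G,M)$ separately, nor does it invoke Krein--Smulian or analyse $\varphi\mapsto {\rm M}_{M,\psi\ast\varphi}^\ast(f)$ on the predual. Instead it immediately writes, using the embedding $\delta$ from Lemma~\ref{Lem:multiplier}(2) and a normal extension $g$ of $f$ to $\mathbb B(\mathfrak H)\votimes L(G)$,
\[
\omega_{a,f,\psi}(\varphi)=\bigl\langle(\id_{\mathbb B(\mathfrak H)}\votimes{\rm M}_{\psi\ast\varphi})(\delta(a)),\,g\bigr\rangle,
\]
and observes that this is exactly the form covered by \cite[Prop.~1.3(a)]{HK}, which already asserts membership in $Q(G)$. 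Your route---Krein--Smulian, tensor approximation of $\tilde f$, and the reduction to $\varphi\mapsto(\psi\ast\varphi)\cdot h$ being weak-$\ast$-to-norm continuous on bounded sets---is valid and has the merit of unpacking \emph{why} \cite[Prop.~1.3]{HK} works (and where $\psi\in A_c(G)$ enters), but it is a re-derivation of that proposition rather than a different proof. Your separate treatment of $a\in C_c(G,M)$ is correct but ultimately unused once you pass to the general case.
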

\begin{proof}
(1):~Lemma \ref{Lem:multiplier} implies $\|\omega_{a, f}\|_{M_0A(G)^\ast} \leq \|a\| \|f\|$
for all $a\in A \rc G$ and $f\in (A \rc G)^\ast$.
Hence it suffices to show the claim
in the case
$a\in C_c(G, A)$. Fix $a\in C_c(G, A)$ and $f\in (A\rc G)^\ast$.
Let $f^{\mathcal{M}}$ be the extension of $f$ to $\mathcal{M}(A \rc G)$ which is strictly continuous on the unit ball of $\mathcal{M}(A \rc G)$.
(The restriction of the double dual map $f^{\ast\ast} \colon (A \rc G)^{\ast\ast}  \rightarrow \mathbb{C}$ to $\mathcal{M}(A \rc G) \subset (A \rc G)^{\ast\ast}$ gives the desired extension.)
Define the map $\tilde{f} \colon G \rightarrow A^\ast$
to be $\langle x, \tilde{f} (s)\rangle:= \langle x u_s, f^{\mathcal{M}}\rangle$ for $x\in A$, $s\in G$.
Then $\tilde{f}$ is bounded and weak-$\ast$ continuous.
Observe that, with respect to the strict topology, we have
\[\bar{{\rm M}}_{A, \varphi}(a)= \int_{G} \varphi(s)a(s)u_s dm(s).\]
By the strict continuity of $f^{\mathcal{M}}$, for any $\varphi \in A(G)$,
\[\omega_{a, f}(\varphi) = \int_G \langle \varphi(s)a(s)u_s, f^{\mathcal{M}}\rangle dm(s)= \int_G \varphi(s)\langle a(s), \tilde{f} (s)\rangle dm(s).\]
Since $a\in C_c(G, A)$ and $\tilde{f}$ is bounded and weak-$\ast$ continuous,
we conclude \[\omega_{a, f} \in C_c(G) \subset Q(G).\]

\noindent
(2):~ We fix a faithful normal $\ast$-representation
$M \vnc G \subset \mathbb{B}(\mathfrak{H})$.
We then identify $M \vnc G$ with a von Neumann subalgebra of $\mathbb{B}(\mathfrak{H})  \votimes L(G)$ via the normal embedding
\[\delta \colon M \vnc G \rightarrow (M \vnc G) \votimes L(G) \subset \mathbb{B}(\mathfrak{H})  \votimes L(G)\] defined in the proof of Lemma \ref{Lem:multiplier} (2).
We extend $f$ to a normal linear functional $g$ on $\mathbb{B}(\mathfrak{H})   \votimes L(G)$. 
Then, since $\delta \circ {\rm M}_{M, \phi}= (\id_{M \vnc G} \votimes {\rm M}_\phi) \circ \delta$ for all $\phi \in M_0A(G)$ (by the definition of ${\rm M}_{M, \phi}$),
we obtain
\[\omega_{a, f, \psi}(\varphi)=\langle (\delta \circ {\rm M}_{M, \psi \ast \varphi})(a), g \rangle = \langle (\id_{\mathbb{B}(\mathfrak{H})} \votimes {\rm M}_{\psi \ast \varphi})(\delta(a)), g\rangle \quad {\rm for~all~}\varphi \in M_0A(G).\]
It is proved in Proposition 1.3 (a) of \cite{HK}
that linear functionals of this form sit in $Q(G)$.
\end{proof}
\begin{Cor}\label{Cor:converge}
Let $G$ be a locally compact group with the AP.
\begin{enumerate}[\upshape(1)]
\item
Let $A$ be a $G$-\Cs-algebra.
Then there is a net $(\varphi_i)_{i\in I}$ in $A_c(G)$ with the following property:
the net $\bar{{\rm M}}_{A, \varphi_i}$ converges to $\id_{A \rc G}$ in the strong stable
point-norm topology. 
\item
Let $M$ be a $G$-\Ws-algebra.
Then there is a net $(\varphi_i)_{i\in I}$ in $A_c(G)$ with the following property:
the net ${\rm M}_{M, \varphi_i}$ converges to $\id_{M \vnc G}$ in the stable
point-ultraweak topology.
\end{enumerate}
\end{Cor}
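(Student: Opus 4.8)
The plan is to deduce both parts of Corollary~\ref{Cor:converge} from the approximation property of $G$ together with Lemma~\ref{Lem:Q(G)}, disposing of the ``stable'' and ``strong'' decorations by a stabilisation trick and, in the \Cs-case, by a Hahn--Banach (Mazur) passage from weak to norm convergence. First I would fix the approximation net: by Definition~\ref{Def:AP} and the remark after it, choose $(\psi_i)_{i\in I}$ in $A_c(G)$ with $\psi_i\to 1$ in the $\sigma(M_0A(G),Q(G))$-topology. Next I would absorb the decorations. For a $G$-\Cs-algebra $A$ there is a canonical identification $(\mathbb{B}\otimes A)\rc G=\mathbb{B}\otimes(A\rc G)$ (reduced crossed products commute with minimal tensor products by a fixed algebra carrying the trivial action), and since on the spanning elements $f\otimes b\otimes a$, with $f\in C_c(G)$, $b\in\mathbb{B}$, $a\in A$, both maps act as multiplication by $\varphi$, this identification intertwines $\bar{{\rm M}}_{\mathbb{B}\otimes A,\varphi}$ with $\id_{\mathbb{B}}\otimes\bar{{\rm M}}_{A,\varphi}$ for every $\varphi\in M_0A(G)$; likewise $(\mathbb{B}\votimes M)\vnc G=\mathbb{B}\votimes(M\vnc G)$ intertwines ${\rm M}_{\mathbb{B}\votimes M,\varphi}$ with $\id_{\mathbb{B}}\votimes{\rm M}_{M,\varphi}$. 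Hence it is enough to produce a net $(\varphi_i)$ in $A_c(G)$ with $\bar{{\rm M}}_{B,\varphi_i}\to\id_{B\rc G}$ in the point-norm topology for $B:=\mathbb{B}\otimes A$ (this gives~(1)), respectively ${\rm M}_{N,\varphi_i}\to\id_{N\vnc G}$ in the point-ultraweak topology for $N:=\mathbb{B}\votimes M$ (this gives~(2)).

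For~(1) I would apply Lemma~\ref{Lem:Q(G)}(1) to $B$: for all $b\in B\rc G$ and $f\in(B\rc G)^\ast$ the functional $\varphi\mapsto\langle\bar{{\rm M}}_{B,\varphi}(b),f\rangle$ lies in $Q(G)$, so $\langle\bar{{\rm M}}_{B,\psi_i}(b),f\rangle\to\langle\bar{{\rm M}}_{B,1}(b),f\rangle=\langle b,f\rangle$ (using $\bar{{\rm M}}_{B,1}=\id_{B\rc G}$, clear from $1\cdot a=a$); that is, $\bar{{\rm M}}_{B,\psi_i}(b)\to b$ weakly for every $b$. To upgrade this to norm convergence: $A_c(G)$ is a linear space and $\psi\mapsto\bar{{\rm M}}_{B,\psi}$ is linear, so for every finite set $\{b_1,\dots,b_n\}\subseteq B\rc G$ the set $\{(\bar{{\rm M}}_{B,\psi}(b_1),\dots,\bar{{\rm M}}_{B,\psi}(b_n)):\psi\in A_c(G)\}\subseteq(B\rc G)^n$ is convex; its norm closure therefore equals its weak closure and contains $(b_1,\dots,b_n)$, so for each $\varepsilon>0$ there is $\psi\in A_c(G)$ with $\|\bar{{\rm M}}_{B,\psi}(b_k)-b_k\|<\varepsilon$ for all $k$. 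Indexing over the directed set of pairs $(\{b_1,\dots,b_n\},\varepsilon)$ (ordered by inclusion and reverse magnitude) produces the required net.

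For~(2) a single mollification is needed, since Lemma~\ref{Lem:Q(G)}(2) yields membership in $Q(G)$ only after convolving by an element of $A_c(G)$. I would fix once and for all a function $\rho\in A_c(G)$ with $\int_G\rho\,dm=1$ (for instance a suitable scalar multiple of $s\mapsto m(sV\cap V)/m(V)$ for a compact neighbourhood $V$ of $e$), and set $\varphi_i:=\rho\ast\psi_i$, which lies in $A_c(G)$ since $C_c(G)\ast A_c(G)\subseteq A_c(G)$. Applying Lemma~\ref{Lem:Q(G)}(2) to $N$ with $\psi=\rho$: for all $a\in N\vnc G$ and $f\in(N\vnc G)_\ast$ the functional $\varphi\mapsto\langle{\rm M}_{N,\rho\ast\varphi}(a),f\rangle$ lies in $Q(G)$, whence $\langle{\rm M}_{N,\varphi_i}(a),f\rangle\to\langle{\rm M}_{N,\rho\ast 1}(a),f\rangle$. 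Since $\rho\ast 1$ is the constant function with value $\int_G\rho\,dm=1$, the limit equals $\langle{\rm M}_{N,1}(a),f\rangle=\langle a,f\rangle$, so ${\rm M}_{N,\varphi_i}(a)\to a$ ultraweakly for every $a$. Here no weak-to-norm upgrade is needed, since ultraweak convergence is already the target and Lemma~\ref{Lem:Q(G)}(2) applies to every $a\in N\vnc G$.

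The main obstacle is part~(2): unlike the discrete case, the functional $\varphi\mapsto\langle{\rm M}_{M,\varphi}(a),f\rangle$ need not belong to $Q(G)$ for general $a\in M\vnc G$, which is precisely why Lemma~\ref{Lem:Q(G)}(2) is stated with the convolution $\psi\ast\varphi$; the crux is that convolving the approximation net on the left by a fixed mean-one $\rho\in A_c(G)$ simultaneously repairs this defect and keeps $\rho\ast 1=1$, so that the limit is the identity. The remaining points are routine bookkeeping: that $A_c(G)$ is stable under left convolution by $C_c(G)$ and under finite linear combinations (so the constructed nets lie in $A_c(G)$), and that the two ``commuting with tensor products'' identifications are canonical with respect to the multiplier maps, so that plain point-norm, respectively point-ultraweak, convergence on the stabilised algebra is equivalent to the strong stable, respectively stable, version for the original one.
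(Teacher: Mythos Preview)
Your proof is correct and follows essentially the same approach as the paper: for (1) you apply Lemma~\ref{Lem:Q(G)}(1) to the stabilised algebra $\mathbb{B}\otimes A$ to get pointwise weak convergence and then pass to convex combinations via Hahn--Banach, and for (2) you convolve the approximation net on the left by a fixed mean-one $\rho\in A_c(G)$ so that Lemma~\ref{Lem:Q(G)}(2) applies to $\mathbb{B}\votimes M$ and $\rho\ast 1=1$ gives the identity in the limit. The paper's own argument is identical, only stated more tersely.
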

\begin{proof}
(1):~Since $G$ has the AP, one can choose a net $(\varphi_i)_{i\in I}$ in $A_c(G)$
converging to $1$ in the $\sigma(M_0A(G), Q(G))$-topology.
By applying Lemma \ref{Lem:Q(G)} (1) to $\mathbb{B} \otimes A$, where $\mathbb{B}$ is equipped with the trivial $G$-action, we conclude that
the net $\id_{\mathbb{B}} \otimes \bar{{\rm M}}_{A, \varphi_i}$ pointwise weakly converges to $\id_{\mathbb{B}}\otimes \id_{A \rc G}$.
Now a standard application of the Hahn--Banach theorem
implies the existence of the desired net in the convex hull of $\{\varphi_i: i \in I\}$.

\noindent
(2):~ Take a non-negative function $\psi$ in $A_c(G)$ with $\int_G \psi dm =1$.
Then observe that $\psi \ast 1 =1$.
Take a net $(\varphi_i)_{i\in I}$ in $A_c(G)$ converging
to $1$ in the $\sigma(M_0A(G), Q(G))$-topology.
By Lemma \ref{Lem:Q(G)} (2) (applied to $\mathbb{B}\votimes M$), the net $(\psi \ast \varphi_i)_{i\in I}$
has the desired property.
\end{proof}
The next lemma is important to carry out mollification arguments.
For a locally compact group $G$, denote by $\Delta_G$ the modular function of $G$.
That is, the continuous multiplicative function
$\Delta_G \colon G \rightarrow \mathbb{R}^\times_+$
satisfying
$m(Us)=\Delta_G(s)m(U)$ for all $s\in G$, $U\subset G$.

\begin{Lem}\label{Lem:ineq}
Let $A$ be a $G$-\Cs-algebra.
Then for any $a\in C_c(G, A)$, $b, c\in C_c(G)$, and $s\in G$,
we have $\|(b^\ast \ast a \ast c)(s)\|_A \leq \Delta_G(s)^{-1/2}\|b \|_2 \|c\|_2\|a\|_{A\rc G} $.
\end{Lem}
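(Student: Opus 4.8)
The plan is to identify the value $(b^{\ast}\ast a\ast c)(s)\in A$ with a matrix coefficient of the operator $a$ on the Hilbert space of a regular covariant representation, tested against two explicit vectors whose $L^2$-norms encode $\|b\|_2$, $\|c\|_2$ and the factor $\Delta_G(s)^{-1/2}$. Throughout, $\alpha$ denotes the $G$-action on $A$.

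First I would fix a faithful nondegenerate representation $A\subset\mathbb{B}(\mathfrak{H})$ and realise $A\rc G$ on $L^2(G,\mathfrak{H})$ through the associated regular covariant representation, identifying $a\in C_c(G,A)$ with the operator it induces, so that $\|a\|_{A\rc G}$ is its operator norm and, for $\zeta,\zeta'\in C_c(G,\mathfrak{H})$,
\[\langle a\zeta,\zeta'\rangle=\int_G\int_G\langle \alpha_{t^{-1}}(a(v))\,\zeta(v^{-1}t),\,\zeta'(t)\rangle\,dm(v)\,dm(t).\]
Then I would record, by elementary changes of variables (left invariance of $m$ together with $\int_G F(s^{-1})\,dm(s)=\int_G F(s)\Delta_G(s)^{-1}\,dm(s)$ and the convention $b^{\ast}(t)=\Delta_G(t)^{-1}\overline{b(t^{-1})}$), the explicit formula
\[(b^{\ast}\ast a\ast c)(s)=\int_G\int_G\overline{b(t)}\,\alpha_{t^{-1}}(a(r))\,c(r^{-1}ts)\,dm(t)\,dm(r);\]
in particular $b^{\ast}\ast a\ast c\in C_c(G,A)$, so its value at $s$ is a well-defined element of $A$.

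Now the key step: given unit vectors $\xi,\eta\in\mathfrak{H}$, set $\zeta(x):=c(xs)\xi$ and $\zeta'(x):=b(x)\eta$, elements of $C_c(G,\mathfrak{H})$. Comparing the two displayed integrals (after Fubini and relabelling the integration variables) gives $\langle (b^{\ast}\ast a\ast c)(s)\xi,\eta\rangle=\langle a\zeta,\zeta'\rangle$. Since $\|\zeta'\|_2^{2}=\|b\|_2^{2}$ and, by the substitution $x\mapsto xs$ and $m(Us)=\Delta_G(s)m(U)$, $\|\zeta\|_2^{2}=\int_G|c(xs)|^{2}\,dm(x)=\Delta_G(s)^{-1}\|c\|_2^{2}$, the Cauchy--Schwarz estimate gives
\[|\langle (b^{\ast}\ast a\ast c)(s)\xi,\eta\rangle|=|\langle a\zeta,\zeta'\rangle|\le\|a\|_{A\rc G}\,\|\zeta\|_2\,\|\zeta'\|_2=\Delta_G(s)^{-1/2}\|b\|_2\|c\|_2\|a\|_{A\rc G},\]
and taking the supremum over unit vectors $\xi,\eta\in\mathfrak{H}$ yields the asserted inequality.

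The substance of the argument is precisely the identification $\langle (b^{\ast}\ast a\ast c)(s)\xi,\eta\rangle=\langle a\zeta,\zeta'\rangle$; there is no real obstacle beyond it. The point that is easy to get wrong is the asymmetry of the two test vectors (the right translate $c(xs)$ on one side against $b(x)$ on the other) and the consistent tracking of the twisted-convolution, involution, and Haar-measure conventions, which is exactly what forces the modular factor $\Delta_G(s)^{-1/2}$ onto the correct side.
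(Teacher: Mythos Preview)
Your proof is correct and follows essentially the same approach as the paper: identify $(b^{\ast}\ast a\ast c)(s)$ as a matrix coefficient of $a$ in a regular covariant representation against test vectors built from $b$ and $c$, then bound by $\|a\|_{A\rc G}$ times the $L^2$-norms. The only cosmetic differences are that the paper first reduces to $s=e$ via the right-translation identity $(b^{\ast}\ast a\ast c)(s)=(b^{\ast}\ast a\ast\tilde{\rho}_s(c))(e)$ and packages the test vectors as bounded operators $V,W\colon\mathfrak{H}\to L^2(G,\mathfrak{H})$ (using Fell's absorption), whereas you handle general $s$ directly by building the shift into $\zeta(x)=c(xs)\xi$ and work with scalar matrix coefficients; the substance is the same.
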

\begin{proof}
Denote by $\tilde{\rho} \colon G \curvearrowright C_c(G)$
the (non-normalized) right translation action
\[\tilde{\rho}_v(d)(w):=d(wv),\quad d\in C_c(G), v, w\in G.\]
Then direct computations show
\[(b^\ast \ast a \ast c)(s)= (b^\ast \ast a \ast \tilde{\rho}_s(c))(e),\qquad \|\tilde{\rho}_s(c)\|_2 = \Delta_G(s)^{-1/2}\|c\|_2.\]
Hence it suffices to show the statement for $s=e$.

Fix a non-degenerate faithful $\ast$-representation $A\rc G\subset \mathbb{B}(\mathfrak{H})$.
Define a covariant representation of $A$ on $L^2(G)\otimes \mathfrak{H} = L^2(G, \mathfrak{H})$
as follows.
\[x\mapsto  \id_{L^2(G)} \otimes x,~
s\mapsto \lambda_s\otimes u_s;~x\in A,~s\in G.\]
By Fell's absorption principal, this covariant representation
induces a faithful $\ast$-representation
\[\sigma \colon A \rc G \rightarrow \mathbb{B}(L^2(G, \mathfrak{H})).\]
Denote its strictly continuous extension on $\mathcal{M}(A\rc G)$ by the same symbol $\sigma$.

We next define $V, W \in \mathbb{B}(\mathfrak{H}, L^2(G, \mathfrak{H}))$
to be
\[[V(\xi)](s):=c(s)u_s\xi,\quad [W(\xi)](s):= b(s)u_s\xi \quad {\rm for~} \xi \in \mathfrak{H},~s\in G.\]
Note that $\|V\| \leq \|c \|_2$, $\|W\| \leq \|b\|_2$.
We show
$(b^\ast \ast a \ast c)(e)= W^\ast\sigma(a) V$.
By the above inequalities, this completes the proof.
To prove the claim,
it suffices to show the equation for $xu_s \in \mathcal{M}(A\rc G)$ instead of $a$ for each $x\in A$ and $s\in G$.
For $x\in A$, $s\in G$, $\xi$, $\eta\in \mathfrak{H}$,
\begin{eqnarray*} \langle \sigma(x u_s)V\xi, W\eta\rangle
&=&\int_G \langle x u_s c(s^{-1}t)u_{s^{-1}t}\xi, b(t)u_t\eta\rangle dm(t)\\
&=&\int_G \langle \overline{b(t)} \alpha_{t^{-1}}(x)c(s^{-1}t)\xi, \eta\rangle dm(t)\\
&=&\int_G \langle \overline{b(t^{-1})} \Delta_G(t^{-1})\alpha_{t}(x) c(s^{-1}t^{-1})\xi, \eta\rangle dm(t) \\
&=& \langle [(b^\ast \ast xu_s \ast c)(e)]\xi, \eta \rangle.
\end{eqnarray*}
This proves $(b^\ast \ast xu_s \ast c)(e)= W^\ast\sigma(xu_s) V$.
\end{proof}

We now obtain another key result.
\begin{Prop}\label{Prop:approx}
Let $A$ be a \Cs-algebra with the SOAP $($resp.~OAP$)$.
Then for any $\varphi\in A_c(G)$,
the map $\bar{{\rm M}}_{A, \varphi}$ is obtained as the limit of finite rank operators
in ${\rm CB}(A \rc G)$ in the strong stable point-norm topology $($resp.~ in the stable point-norm topology$)$.
\end{Prop}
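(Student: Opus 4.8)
\emph{Proof plan.} I expect the argument to be a mollification-and-discretisation argument, with Lemma~\ref{Lem:ineq} as the essential quantitative input. For $b,c\in C_c(G)$ let $N_{b,c}\colon A\rc G\to A\rc G$ be the completely bounded map $T\mapsto u(b)^\ast\ast T\ast u(c)$ (where $u(b):=\int_G b(s)u_s\,dm(s)\in\mathcal{M}(A\rc G)$); on $C_c(G,A)$ it is $a\mapsto b^\ast\ast a\ast c$, and $\|N_{b,c}\|_{\mathrm{cb}}\le\|b\|_1\|c\|_1$. Running $b,c$ along an approximate identity of $L^1(G)$ made of nonnegative functions of integral $1$, one has $u(b),u(c)\to 1$ strictly in $\mathcal{M}(A\rc G)$, so $\id_{\mathbb{B}}\otimes N_{b,c}\to\id_{\mathbb{B}}\otimes\id_{A\rc G}$ in the point-norm topology, the net being uniformly bounded; hence $\bar{{\rm M}}_{A,\varphi}\circ N_{b,c}\to\bar{{\rm M}}_{A,\varphi}$ in the strong stable point-norm topology. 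As the strong stable point-norm closure of the finite rank operators is closed, it thus suffices to show that each $\bar{{\rm M}}_{A,\varphi}\circ N_{b,c}$ lies in this closure. Fix such $b,c$ and set $K:=\supp\varphi$, a compact set.

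Next I would exploit compactness of $K$. By Lemma~\ref{Lem:ineq}, for each $s\in G$ the evaluation $E_s\colon A\rc G\to A$, $E_s(T):=(b^\ast\ast T\ast c)(s)$, is bounded with $\|E_s\|\le\Delta_G(s)^{-1/2}\|b\|_2\|c\|_2$; checking on $C_c(G,A)$ and extending by density, $\bar{{\rm M}}_{A,\varphi}\circ N_{b,c}(T)$ is the element of $C_c(G,A)\subset A\rc G$ represented by the $K$-supported continuous function $s\mapsto\varphi(s)E_s(T)$. Choose a compact neighbourhood $K'$ of $K$, a partition of unity $\{\chi_j\}_{j=1}^{n}\subset C_c(G)$ of $K'$ subordinate to a fine open cover, and points $s_j$ in the corresponding supports. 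For a finite rank operator $\Phi$ on $A$ define
\[
\Psi_\Phi(T):=\text{the element of }C_c(G,A)\text{ given by }t\mapsto\sum_{j=1}^{n}\chi_j(t)\,\varphi(s_j)\,\Phi\big(E_{s_j}(T)\big),\qquad T\in A\rc G.
\]
Writing $\Phi=\sum_k g_k(\cdot)x_k$ with $g_k\in A^\ast$, $x_k\in A$, one has $\Psi_\Phi=\sum_{j,k}\big(g_k\circ E_{s_j}\big)(\cdot)\,\varphi(s_j)\,[\chi_j\otimes x_k]$, where $g_k\circ E_{s_j}\in(A\rc G)^\ast$ (Lemma~\ref{Lem:ineq}) and $[\chi_j\otimes x_k]\in C_c(G,A)\subset A\rc G$; thus $\Psi_\Phi$ is a finite rank operator in ${\rm CB}(A\rc G)$.

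The plan is then to let $\Phi$ run along a net $(\Phi_\mu)$ of finite rank operators witnessing the SOAP of $A$, and to take the limits in the order: first $\mu$ (the partition frozen), then refine the partition, then refine $b,c$. With the partition frozen, for a fixed $S\in\mathbb{B}\otimes(A\rc G)$ the elements $(\id_{\mathbb{B}}\otimes E_{s_j})(S)\in\mathbb{B}\otimes A$, $j=1,\dots,n$, form a \emph{finite} set, so the SOAP applied to this finite set, together with the crude bound $\|\cdot\|_{A\rc G}\le m(K')\sup_{s}\|\cdot(s)\|$ for $K'$-supported elements, yields $\id_{\mathbb{B}}\otimes\Psi_{\Phi_\mu}(S)\to\id_{\mathbb{B}}\otimes\Psi_{\Phi_\infty}(S)$, where $\Psi_{\Phi_\infty}$ is obtained by replacing $\Phi$ with $\id_A$; hence $\Psi_{\Phi_\infty}$ lies in the strong stable point-norm closure of the finite rank operators. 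Next, with $b,c$ fixed and the partition refined, uniform continuity of $\varphi$ and of $s\mapsto(\id_{\mathbb{B}}\otimes E_s)(S)$ on the compact set $K'$ gives $\Psi_{\Phi_\infty}\to\bar{{\rm M}}_{A,\varphi}\circ N_{b,c}$ in the strong stable point-norm topology; since the closure is closed, $\bar{{\rm M}}_{A,\varphi}\circ N_{b,c}$ lies in it, and by the first step so does $\bar{{\rm M}}_{A,\varphi}$.

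The main obstacle — and the reason for this particular order of limits — is the interaction between the discretisation error, which is controlled by the mesh of the partition but weighted by $\|\Phi_\mu\|_{\mathrm{cb}}$, and the SOAP error, which forces $\Phi_\mu$ far out along its (a priori unbounded) net for each choice of partition points. Taking the SOAP limit first, with the partition frozen, decouples the two: the SOAP is only ever invoked at a \emph{finite} set of elements of $\mathbb{B}\otimes A$, so no uniform-on-compacta control of $(\Phi_\mu)$ is required. The remaining verifications are routine: the behaviour of $N_{b,c}$ and the kernel formula for $\bar{{\rm M}}_{A,\varphi}\circ N_{b,c}$ (twisted-convolution bookkeeping together with Lemma~\ref{Lem:ineq}), complete boundedness of the maps involved and genuine finite-rankedness of $\Psi_\Phi$, continuity of $s\mapsto E_s(T)$, and the modulus-of-continuity estimates on $K'$. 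The OAP case is identical with $\mathbb{B}$ replaced by $\mathbb{K}$ throughout.
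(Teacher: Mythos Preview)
Your argument is correct, and it shares the overall architecture of the paper's proof: reduce $\bar{{\rm M}}_{A,\varphi}$ to a mollified version (you use $N_{b,c}$ with an $L^1$-approximate identity; the paper uses $\theta_U^\ast\ast\cdot\ast\theta_U$ with $\theta_U=m(U)^{-2}\chi_U\ast\chi_U$), and then use Lemma~\ref{Lem:ineq} to control the pointwise values of the mollified operator on the compact support of $\varphi$.

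Where the two proofs diverge is in the last step. You discretise by hand: a partition of unity on $K'\supset\supp\varphi$, sample points $s_j$, and a SOAP net $(\Phi_\mu)$ on $A$, assembled into explicit finite rank operators $\Psi_{\Phi_\mu}$ on $A\rc G$; then you take limits in the order $\mu$, mesh, mollifier. The paper instead observes that Lemma~\ref{Lem:ineq} makes the mollified map $\bar{{\rm M}}_{A,\varphi,U}$ factor, in the category of operator spaces, as
\[
A\rc G \xrightarrow{\ \Phi_{A,\varphi,U}\ } C_0(V(\varphi))\otimes A \xrightarrow{\ \iota_{V(\varphi)}\ } A\rc G,
\]
with both legs completely bounded (boundedness comes from Lemma~\ref{Lem:ineq} and the $L^1$-estimate, complete boundedness from stabilisation). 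Since $C_0(V(\varphi))\otimes A$ inherits the SOAP from $A$, the composition lies in the strong stable point-norm closure of the finite rank operators immediately. Your partition-of-unity-plus-sample-points computation is, in effect, a direct proof that $C_0(V(\varphi))\otimes A$ has the SOAP, spliced into the present situation. So the paper's route is shorter and more conceptual, while yours is more self-contained and makes the finite rank approximants completely explicit. Both yield the OAP case by the same replacement of $\mathbb{B}$ by $\mathbb{K}$.

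One small point you leave implicit but should state: the evaluation maps $E_s$ are \emph{completely} bounded (not merely bounded), with the same constant $\Delta_G(s)^{-1/2}\|b\|_2\|c\|_2$. This follows from the spatial formula $E_e(T)=W^\ast\sigma(T)V$ in the proof of Lemma~\ref{Lem:ineq} together with the translation identity there, and you need it when you form $\id_{\mathbb{B}}\otimes E_s$ in the later estimates.
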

\begin{proof}
We only show the statement for the SOAP. The OAP case is essentially the same.

Denote by $\mathcal{F} \subset {\rm CB}(A \rc G)$ the closure of the set of all finite rank operators in ${\rm CB}(A \rc G)$ with respect to the strong stable point-norm topology.
Let $\mathcal{N}$ be the directed set of all compact neighborhoods of $e\in G$.
For each $U\in \mathcal{N}$,
put 
\[\theta_U := m(U)^{-2} \chi_U \ast \chi_U \in C_c(G) \subset \mathcal{M}(A\rc G).\]
Note that $\|\theta_U\|_{\mathcal{M}(A \rc G)} \leq \|\theta_U\|_1 =1$.
From this, it is easy to show
\[\lim_{U\in \mathcal{N}} \theta_U^\ast \ast a \ast \theta_U =a\]
for all $a\in A \rc G$.
Therefore it suffices to show that the map
\[\bar{{\rm M}}_{A, \varphi, U} := \bar{{\rm M}}_{A, \varphi}(\theta_U^\ast \ast \cdot \ast \theta_U)\]
is contained in $\mathcal{F}$ for all $U \in \mathcal{N}$.

Put $V(\varphi):=\varphi^{-1}(\mathbb{C}\setminus \{0\})$.
Note that $V(\varphi)$ is relatively compact and open in $G$.
We first show that $\im(\bar{{\rm M}}_{A, \varphi, U})\subset
C_0(V(\varphi), A)$ ($\subset A\rc G$).
Let $a\in C_c(G, A)$ be given.
Then
\[\bar{{\rm M}}_{A, \varphi, U}(a) =\varphi \cdot (\theta_U^\ast \ast a \ast \theta_U) \in C_0(V(\varphi), A).\]
This implies
\[\|\bar{{\rm M}}_{A, \varphi, U}(a)\|_{A \rc G} \leq\|\bar{{\rm M}}_{A, \varphi, U}(a)\|_1 \leq
m(V(\varphi))\|\bar{{\rm M}}_{A, \varphi, U}(a)\|_\infty.\]
We will estimate the norm $\|\bar{{\rm M}}_{A, \varphi, U}(a)\|_\infty$.
Put
\[C_{\varphi}:= \max \{\Delta_G(s)^{-1/2} : s\in \supp(\varphi)\}.\]
Then Lemma \ref{Lem:ineq} implies
\[\|\bar{{\rm M}}_{A, \varphi, U}(a)\|_\infty \leq \|\varphi\|_\infty
\| (\theta_U^\ast \ast a \ast \theta_U)|_{V(\varphi)}\|_{\infty}
\leq C_{\varphi} \|\varphi\|_\infty \|\theta_U\|_2^2\|a\|_{A\rc G}.\]
These inequalities yield
\[\im(\bar{{\rm M}}_{A, \varphi, U})\subset
C_0(V(\varphi), A) ~(\subset A\rc G).\]

We next consider the map $\Phi_{A, \varphi, U} \colon A \rc G \rightarrow C_0(V(\varphi)) \otimes A$ obtained by composing
$\bar{{\rm M}}_{A, \varphi, U}$ with the canonical linear
isomorphism $C_0(V(\varphi), A) \cong C_0(V(\varphi)) \otimes A$.
Observe that $\Phi_{A, \varphi, U}$ is bounded by the inequalities in the previous paragraph. 
We also observe that
the canonical inclusion map
$\iota_{V(\varphi)} \colon C_0(V(\varphi))\otimes A \cong C_0(V(\varphi), A) \rightarrow A \rc G$ is bounded.
By passing to the stabilizations, we obtain complete boundedness of
$\Phi_{A, \varphi, U}$ and $\iota_{V(\varphi)}$.

Obviously $\bar{{\rm M}}_{A, \varphi, U}=\iota_{V(\varphi)} \circ \Phi_{A, \varphi, U}$.
Thus $\bar{{\rm M}}_{A, \varphi, U}$ factors through
$C_0(V(\varphi))\otimes A$ in the category of operator spaces.
Since $C_0(V(\varphi)) \otimes A$ has the SOAP, we have $\bar{{\rm M}}_{A, \varphi, U}\in \mathcal{F}$.
\end{proof}
Now we are able to prove the \Cs-algebra analogues of 
Theorem 3.1 (a) and Theorem 3.2 (a), (b) in \cite{HK}.
We point out that their proofs involve unbounded operator-valued weights 
(\cite{Ha78a}, \cite{Ha78b}, \cite{Ha79a}, \cite{Ha79b}).
Since there are no corresponding results for \Cs-algebras,
their proofs do not seem to work in the \Cs-algebra case.
\begin{Thm}\label{Thm:SOAP}
Let $G$ be a locally compact group with the AP.
Let $A$ be a $G$-\Cs-algebra.
Then the reduced crossed product $A\rc G$ has the SOAP
if and only if the \Cs-algebra $A$ has the SOAP.
In particular the reduced group \Cs-algebra $\rg(G)$ has the SOAP.
The analogous statement also holds true for the OAP.
\end{Thm}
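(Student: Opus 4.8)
The plan is to prove the two implications separately, using throughout the following soft fact: for any \Cs-algebra $B$, the closure $\mathcal{F}(B)\subseteq{\rm CB}(B)$ of the finite rank operators in the strong stable point-norm topology is (being a closure) closed in that topology, and for fixed completely bounded maps $\phi\colon B'\to B$ and $\psi\colon B\to B'$ the assignment $T\mapsto\psi\circ T\circ\phi$ carries $\mathcal{F}(B)$ into $\mathcal{F}(B')$ — it sends finite rank maps to finite rank maps, and composition with a fixed completely bounded map is continuous for the strong stable point-norm topology, since $\id_{\mathbb{B}}\otimes(\psi\circ T\circ\phi)=(\id_{\mathbb{B}}\otimes\psi)\circ(\id_{\mathbb{B}}\otimes T)\circ(\id_{\mathbb{B}}\otimes\phi)$ and the outer factors are bounded. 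Granting this, the ``if'' direction is immediate: if $A$ has the SOAP, then Corollary \ref{Cor:converge}(1) gives a net $(\varphi_i)_{i\in I}$ in $A_c(G)$ with $\bar{{\rm M}}_{A, \varphi_i}\to\id_{A\rc G}$ in the strong stable point-norm topology, while Proposition \ref{Prop:approx} shows $\bar{{\rm M}}_{A, \varphi_i}\in\mathcal{F}(A\rc G)$ for every $i$; since $\mathcal{F}(A\rc G)$ is closed, $\id_{A\rc G}\in\mathcal{F}(A\rc G)$, i.e.\ $A\rc G$ has the SOAP.

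For the ``only if'' direction, assume $A\rc G$ has the SOAP. The idea is to produce completely bounded maps $\phi\colon A\to A\rc G$ and $\psi_U\colon A\rc G\to A$ whose composites converge to $\id_A$. Fix $f,g\in C_c(G)$ with $(f\ast g)(e)=\int_G f(s)g(s^{-1})\,dm(s)=1$. Let $\phi(x)$ be the function $s\mapsto f(s)x$, regarded as an element of $C_c(G,A)\subset A\rc G$; then $\|\phi(x)\|_{A\rc G}\le\|f\|_1\|x\|$, and the same bound for the matrix amplifications gives $\|\phi\|_{\rm cb}\le\|f\|_1$. For a compact neighbourhood $U$ of $e$ put $\xi_U:=m(U)^{-1/2}\chi_U$ and define $\psi_U$ to be $m(U)^{-1/2}$ times the map $a\mapsto(\xi_U^\ast\ast a\ast g)(e)$; by Lemma \ref{Lem:ineq}, applied to $A$ and to each $M_n(A)$, $\psi_U$ is completely bounded with $\|\psi_U\|_{\rm cb}\le m(U)^{-1/2}\|g\|_2$. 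A short computation using the formula for $(b^\ast\ast\,\cdot\,\ast c)(e)$ established in the proof of Lemma \ref{Lem:ineq} gives, for $x\in A$,
\[\psi_U(\phi(x))=m(U)^{-1}\int_U (f\ast g)(t)\,\alpha_{t^{-1}}(x)\,dm(t),\]
which converges in norm to $(f\ast g)(e)\,x=x$ as $U\to\{e\}$, by continuity of $t\mapsto(f\ast g)(t)\,\alpha_{t^{-1}}(x)$ at $t=e$.

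So $\psi_U\circ\phi\to\id_A$ in the point-norm topology. Running the identical construction for the $G$-\Cs-algebra $\mathbb{B}\otimes A$ (with the trivial $G$-action on $\mathbb{B}$), whose reduced crossed product is $\mathbb{B}\otimes(A\rc G)$ and for which the above maps become $\id_{\mathbb{B}}\otimes\phi$ and $\id_{\mathbb{B}}\otimes\psi_U$, shows that in fact $\psi_U\circ\phi\to\id_A$ in the strong stable point-norm topology. Since $A\rc G$ has the SOAP, $\id_{A\rc G}\in\mathcal{F}(A\rc G)$, and hence $\psi_U\circ\phi=\psi_U\circ\id_{A\rc G}\circ\phi\in\mathcal{F}(A)$ for every $U$ by the soft fact above; letting $U\to\{e\}$ and using closedness of $\mathcal{F}(A)$ gives $\id_A\in\mathcal{F}(A)$, i.e.\ $A$ has the SOAP.

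Taking $A=\mathbb{C}$ with the trivial action yields the SOAP of $\rg(G)=\mathbb{C}\rc G$. The statement for the OAP is obtained by the same argument with $\mathbb{B}$ replaced by $\mathbb{K}$ and ``strong stable point-norm'' replaced by ``stable point-norm'' throughout, invoking the OAP parts of Corollary \ref{Cor:converge} and Proposition \ref{Prop:approx}. I expect the ``only if'' direction to be the delicate part of the argument: its substance is to write down completely bounded maps between $A$ and $A\rc G$ whose composite converges to $\id_A$, and the two points requiring genuine care are the normalisation by $m(U)^{-1/2}$, which prevents the compressions $\psi_U$ from degenerating as $U\to\{e\}$, and the verification — via the naturality of the construction in the coefficient algebra — that the resulting convergence is strong-stable rather than merely pointwise.
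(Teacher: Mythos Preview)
Your ``if'' direction is identical to the paper's. For the converse, however, you take a genuinely different route. The paper argues contrapositively via the slice map characterization (Theorem~\ref{Thm:OAP}): assuming $A$ fails the SOAP, it picks a triple $(A,B,X)$ without the slice map property and exhibits an explicit element of $F(A\rc G,B,X)\setminus (A\rc G)\otimes X$, using the evaluation maps $\Phi_{A,\varphi,U}$ built in the proof of Proposition~\ref{Prop:approx}. Your approach is instead to factor an approximation of $\id_A$ through $A\rc G$ by completely bounded maps $\phi$ and $\psi_U$, so that the SOAP of the crossed product transfers back to $A$ by pre- and post-composition. This avoids invoking Kraus's slice map theorem and is arguably more elementary; it also makes transparent that the converse requires no hypothesis on $G$ (a point the paper notes as well). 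The paper's route, on the other hand, reuses machinery already in place and yields the slightly sharper information that any specific bad slice-map triple for $A$ lifts to one for $A\rc G$. Both arguments ultimately rest on Lemma~\ref{Lem:ineq} to control evaluation-at-$e$ maps, and your normalization by $m(U)^{-1/2}$ together with the naturality-in-coefficients trick for upgrading to stable convergence are exactly the right moves.
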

\begin{proof}
We only show the statement for the SOAP. The OAP case is essentially the same.

When $A$ has the SOAP, so does $A\rc G$ by Corollary \ref{Cor:converge}
and Proposition \ref{Prop:approx}.

We show the converse for general locally compact groups $G$.
Assume that $A$ does not have the SOAP.
By Theorem \ref{Thm:OAP}, there is a \Cs-algebra $B$ and a closed subspace $X$
satisfying $F(A, B, X) \neq A \otimes X$.
We will show that the triplet $(A \rc G, B, X)$ does not have
the slice map property.
Take a non-negative function $a\in C_c(G)$ with $a(e)=1$.
Take $x\in F(A, B, X) \setminus (A \otimes X)$.
Define $b\in C_c(G, A\otimes B)$ to be $b(s):= a(s)x$; $s\in G$.
We first show that $b\in F(A\rc G, B, X)$.
As we have seen in the proof of Lemma \ref{Lem:Q(G)}, for any $\omega\in (A\rc G)^\ast$,
there is a bounded weak-$\ast$ continuous map $\tilde{\omega}\colon G \rightarrow A^\ast$ satisfying
\[(\omega \otimes \id_B)(b)= \int_G a(s) (\tilde{\omega}(s) \otimes \id_B)(x)dm(s).\]
Since the integrants are norm continuous and take their values in $X$,
we have $b \in F(A\rc G, B, X)$.
We next show $b \not\in (A \rc G) \otimes X$.
Let $\mathcal{N}$ be the directed set of all compact neighborhoods of $e\in G$.
Take $\varphi \in A_c(G)$
satisfying $\varphi(e)=1$.
Then, on the one hand, for any $U\in \mathcal{N}$, we have
\[(\Phi_{A, \varphi, U} \otimes \id_B)([A\rc G] \otimes X) \subset
 C_0(V(\varphi))\otimes A \otimes X.\]
Here and below we adopt the notations in the proof of Proposition \ref{Prop:approx}.
On the other hand,
since $b\in C_c(G, A\otimes B)$,
we have
\[\lim_{U\in \mathcal{N}} [(\Phi_{A, \varphi, U} \otimes \id_B)(b)](e) = \lim_{U\in \mathcal{N}} (\theta_U^\ast \ast b \ast \theta_U)(e)=  x \not\in A \otimes X.\]
Since $A \otimes X$ is closed in $A \otimes B$, this yields that, for some $U\in \mathcal{N}$,
\[[(\Phi_{A, \varphi, U} \otimes \id_B)(b)](e) \not \in A \otimes X.\]
This concludes $b \not \in  (A\rc G) \otimes X$.
By Theorem \ref{Thm:OAP} (2), $A \rc G$ cannot have the SOAP.
\end{proof}

As another consequence of our method, we establish the implication ``AP $\Rightarrow$ exactness'' for general locally compact groups.
This strengthens Corollary E of \cite{BCL},
where this implication is shown for second countable weakly amenable groups.
However we emphasize that weak amenability is much stronger
than the AP.
Recall from \cite{KW} that a locally compact group $G$ is said to be {\it exact}
if the functor $- \rc G$ is exact.
Note that not all groups are exact \cite{Gro}, \cite{Osa},
and there are exact groups without the AP \cite{LS}, \cite{HT}.
We remark that it is not known in general if the exactness of
a locally compact group follows from the exactness of the reduced group \Cs-algebra.
Thus, unlike the discrete case,
we cannot deduce the following theorem from Theorem \ref{Thm:SOAP}.
\begin{Thm}\label{Thm:exact}
All locally compact groups with the AP are exact.
\end{Thm}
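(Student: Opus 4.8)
The plan is to check the defining exactness condition for the functor $-\rc G$ directly, using the machinery already set up. So fix a $G$-\Cs-algebra $A$, a $G$-invariant closed ideal $I\trianglelefteq A$ with quotient map $q\colon A\to A/I$, and the induced surjection $\bar q\colon A\rc G\to(A/I)\rc G$; the goal is $\ker\bar q=I\rc G$. The inclusion $I\rc G\subseteq\ker\bar q$, together with the (standard) fact that $I\rc G$ embeds into $A\rc G$ as a closed two-sided ideal, reduces everything to proving $\ker\bar q\subseteq I\rc G$. Since $G$ has the AP, Corollary \ref{Cor:converge}(1) supplies a net $(\varphi_i)$ in $A_c(G)$ with $\bar{{\rm M}}_{A,\varphi_i}(a)\to a$ for every $a\in A\rc G$; as $I\rc G$ is closed, it suffices to show $\bar{{\rm M}}_{A,\varphi}(a)\in I\rc G$ for each fixed $\varphi\in A_c(G)$ and each $a\in\ker\bar q$. (When $a\in C_c(G,A)$ this is trivial, since then $\bar q(a)=0$ forces $a\in C_c(G,I)$; the mollifiers below are what handle a general $a$.) Fixing such a $\varphi$ and importing the notation from the proof of Proposition \ref{Prop:approx} — the directed set $\mathcal N$ of compact neighbourhoods of $e$, the mollifiers $\theta_U\in C_c(G)$ for $U\in\mathcal N$, the open relatively compact set $V(\varphi)=\varphi^{-1}(\mathbb{C}\setminus\{0\})$, the bounded maps $\Phi_{A,\varphi,U}\colon A\rc G\to C_0(V(\varphi))\otimes A$ and $\iota_{V(\varphi)}$, and $\bar{{\rm M}}_{A,\varphi,U}=\iota_{V(\varphi)}\circ\Phi_{A,\varphi,U}$ — I will use that $\lim_{U\in\mathcal N}\bar{{\rm M}}_{A,\varphi,U}(a)=\bar{{\rm M}}_{A,\varphi}(a)$, so that it is enough to prove $\bar{{\rm M}}_{A,\varphi,U}(a)\in I\rc G$ for every $U\in\mathcal N$.

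The heart of the argument is an intertwining square. The construction of $\Phi_{A,\varphi,U}$ in the proof of Proposition \ref{Prop:approx} applies verbatim to any $G$-\Cs-algebra — the SOAP was used only in the last sentence of that proof — so $\Phi_{A/I,\varphi,U}\colon(A/I)\rc G\to C_0(V(\varphi))\otimes(A/I)$ is available, and it uses the very same set $V(\varphi)$. Let $q_*\colon C_0(V(\varphi))\otimes A\cong C_0(V(\varphi),A)\to C_0(V(\varphi),A/I)\cong C_0(V(\varphi))\otimes(A/I)$ denote pointwise application of $q$, so that $\ker q_*=C_0(V(\varphi))\otimes I$. I claim the square with top arrow $\Phi_{A,\varphi,U}$, bottom arrow $\Phi_{A/I,\varphi,U}$, left arrow $\bar q$ and right arrow $q_*$ commutes. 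All four maps are bounded, so I would verify this only on the dense subspace $C_c(G,A)\subset A\rc G$, where both composites send $a$ to the function $s\mapsto\varphi(s)\,q\big((\theta_U^\ast\ast a\ast\theta_U)(s)\big)$ on $V(\varphi)$; here one uses that $q$ is $G$-equivariant (as $I$ is $G$-invariant), hence commutes with the pointwise scalar convolutions $\theta_U^\ast\ast(\cdot)\ast\theta_U$.

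Granting the square, the conclusion is immediate: for $a\in\ker\bar q$ one gets $q_*\big(\Phi_{A,\varphi,U}(a)\big)=\Phi_{A/I,\varphi,U}(\bar q(a))=0$, so $\Phi_{A,\varphi,U}(a)\in\ker q_*=C_0(V(\varphi))\otimes I\cong C_0(V(\varphi),I)$, and applying $\iota_{V(\varphi)}$ — which just regards a $C_0(V(\varphi))$-valued function as a compactly supported continuous function on $G$ — puts $\bar{{\rm M}}_{A,\varphi,U}(a)$ into $C_c(G,I)\subseteq I\rc G$. Passing to the limit over $U\in\mathcal N$ gives $\bar{{\rm M}}_{A,\varphi}(a)\in I\rc G$, and then running over $i$ gives $a\in I\rc G$, as wanted, so $\ker\bar q=I\rc G$ and $G$ is exact. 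The step I expect to be most delicate is the commutativity of the intertwining square: it is essentially a bookkeeping matter, but one must keep the identifications $C_0(V(\varphi))\otimes(-)\cong C_0(V(\varphi),-)$ consistent across the two algebras and reduce cleanly to $C_c(G,A)$. Conceptually, the decisive work was already carried out in Proposition \ref{Prop:approx}: the factorization of $\bar{{\rm M}}_{A,\varphi,U}$ through $C_0(V(\varphi))\otimes A$ is precisely what lets exactness of the reduced crossed product be inherited from the (automatic) exactness of the commutative \Cs-algebra $C_0(V(\varphi))$.
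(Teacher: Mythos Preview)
Your argument is correct and is essentially the paper's own proof: both use the AP net from Corollary~\ref{Cor:converge}, mollify with $\theta_U$, establish the intertwining relation with the quotient map on $C_c(G,A)$, and conclude that $\bar{{\rm M}}_{A,\varphi,U}(\ker\bar q)\subset C_c(G,I)$. The only cosmetic differences are that the paper intertwines $\bar{{\rm M}}_{A,\varphi,U}$ directly (rather than $\Phi_{A,\varphi,U}$) and phrases the final containment as $\ker(\pi)\cap C_c(G,A)=C_c(G,I)$ instead of invoking $\ker q_\ast=C_0(V(\varphi))\otimes I$, and it takes the two limits in the opposite order.
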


\begin{proof}
Let $G$ be a locally compact group with the AP.
Let $A$ be a $G$-\Cs-algebra and $I$ be a closed $G$-invariant ideal of $A$.
By Corollary \ref{Cor:converge},
one can take a net $(\varphi_i)_{i\in I}$ in $A_c(G)$
such that the net
$(\bar{{\rm M}}_{A, \varphi_i})_{i\in I}$ converges to $\id_{A \rc G}$ in the pointwise norm topology.
Let $\mathcal{N}$ denote the directed set of all compact neighborhoods of $e$ in $G$.
For $U \in \mathcal{N}$,
put $\theta_U:=m(U)^{-2}\chi_U \ast \chi_U$.
For $i\in I$ and $U \in \mathcal{N}$,
define \[\bar{{\rm M}}_{A, \varphi_i, U}:=\bar{{\rm M}}_{A, \varphi_i}(\theta_U^\ast \ast \cdot \ast \theta_U),~ \bar{{\rm M}}_{A/I, \varphi_i, U}:=\bar{{\rm M}}_{A/I, \varphi_i}(\theta_U^\ast \ast \cdot \ast \theta_U)\]
 as in the proof of Proposition \ref{Prop:approx}.
Let
$\pi \colon A \rc G \rightarrow (A/I) \rc G$ denote the canonical quotient map.
Then direct computations show that
\[\pi \circ \bar{{\rm M}}_{A, \varphi_i, U}= \bar{{\rm M}}_{A/I, \varphi_i, U} \circ \pi\] for all $i\in I$ and $U \in \mathcal{N}$.
(Indeed, by the boundedness of both sides, it suffices to check the equality on $C_c(G, A)$, which is obvious.)
This shows 
$\bar{{\rm M}}_{A, \varphi_i, U}(\ker(\pi)) \subset \ker(\pi).$
It is shown in the proof of Proposition \ref{Prop:approx} that
$\im(\bar{{\rm M}}_{A, \varphi_i, U}) \subset C_c(G, A)$.
Therefore
\[\bar{{\rm M}}_{A, \varphi_i, U}(\ker(\pi)) \subset \ker(\pi) \cap C_c(G, A)= C_c(G, I).\]
Since $x =\lim_{U \in \mathcal{N}} \lim_{i\in I} \bar{{\rm M}}_{A, \varphi_i, U}(x)$ for all $x\in A \rc G$,
we conclude $\ker(\pi)=I \rc G$. 
\end{proof}
\begin{Rem}
Our proofs also work in the \Ws-case after slight modifications.
For instance, the map $\Phi$ extends to a normal map
as it can be given spatially (see Lemma \ref{Lem:ineq}).
We thus obtain alternative proofs of Theorem 3.1 (a) and Theorem 3.2 (a), (b) of \cite{HK}. 
\end{Rem}
\subsection*{Acknowledgements}
The author would like to thank Tim de Laat for
pointing out an inaccuracy in Section 2.2 in the previous version.
We also thank a referee (of another journal) for helpful comments.
Finally we thank the referee for valuable comments.
This work was supported by JSPS KAKENHI Grant-in-Aid for Young Scientists
(Start-up, No.~17H06737) and tenure track funds of Nagoya University.

\end{document}